\definecolor{refkey}{rgb}{0,1,1}
\definecolor{labelkey}{rgb}{1,0,0}
\journal{arXiv}
\newtheorem{rem}{Remark}
\newtheorem{thm}{Theorem}
\newtheorem{lem}[thm]{Lemma}
\newtheorem{cor}{Corollary}
\numberwithin{equation}{section}
\newcommand{\eq} [1] {\begin{equation}\label{#1}\quad}
\newcommand{\en} {\end{equation}}
\newcommand{\norm}[1]{\left\Vert#1\right\Vert}
\newcommand{\abs}[1]{\left\vert#1\right\vert}
\newcommand{\ind}{\operatorname{Ind}}
\newcommand{\Z}{\mathbb Z}
\newcommand{\C}{\mathbb C}
\newcommand{\R}{\mathbb R}
\newcommand{\diag}{\operatorname{diag}}
\newcommand{\adj}{\operatorname{adj}}
\newcommand{\im}{\operatorname{Im}}
\newcommand{\coker}{\operatorname{coker}}
\newcommand{\Ind}{\operatorname{Ind}}
\begin{document}

\begin{frontmatter}

\title{One sided invertibility of matrices over commutative rings, corona problems, and Toeplitz operators with matrix symbols\tnoteref{support}}
\author[C]{M. C. C\^{a}mara}
\ead{cristina.camara@math.ist.utl.pt}
\address[C]{Departamento de Matem\'atica, CAMGSD, Instituto Superior T\'ecnico,
Technical University of Lisbon (TULisbon), Portugal}

\tnotetext[support]{The work on this paper was started during the visits of Rodman and Spitkovsky to
Instituto Superior T\'ecnico, supported by the FCT project PTDC/MAT/81385/2006 (Portugal). The first author was partially supported by FCT through the Program POCI 2010/FEDER and the project PTDC/MAT/121837/2010}

\author[W]{L. Rodman}
\ead{lxrodm@math.wm.edu}
\address[W]{Department of Mathematics, College of William and Mary, Williamsburg, VA 23187, USA}

\author[W]{I. M. Spitkovsky\corref{cor}}
\ead{ilya@math.wm.edu, imspitkovsky@gmail.com}

\cortext[cor]{Corresponding author.}

\begin{abstract}
Conditions are established under which Fredholmness, Coburn's property and one- or two-sided invertibility are shared by a Toeplitz operator with matrix symbol $G$ and the Toeplitz operator with scalar symbol $\det G$. These results are based on one-sided invertibility criteria for  rectangular matrices over appropriate commutative rings and related scalar corona type problems.
\end{abstract}

\begin{keyword} Toeplitz operator\sep  Corona problem \sep Wiener-Hopf factorization \sep  one-sided invertibility \sep  Coburn's property

\medskip

\MSC 47A68, 47A53, 47B35

\end{keyword}

\end{frontmatter}

\section{Introduction}

To outline the main topics of this paper, we need first to agree on some standard notation and introduce some terminology. For any set $X$, we will denote by $X^{n \times m}$ the set of $n \times m$ matrices with entries in $X$, abbreviating $X^{n \times 1}$ to $X^n$. If $X$ is a Banach space (a ring, a (Banach) algebra), then $X^{n\times m}$ (resp., $X^{n\times n}$) is also supplied with the Banach space (resp., ring, (Banach) algebra) structure. A diagonal matrix from $X^{n\times n}$ with the diagonal entries $x_1,\ldots,x_n$ will be denoted $\diag[x_1,\ldots,x_n]$.

Some important examples of $X$ include the Lebesgue spaces $L_p(\R)$ of functions defined on the real line $\R$, and their subspaces $H_p^\pm$ of the traces on $\R$ of the functions from the Hardy spaces\footnote{$H_\infty^\pm$ consist of all functions analytic and bounded in $\C^\pm$; see Section~\ref{s:Fred} for the precise definition of $H_p^\pm$ for $p<\infty$.} over the half-planes $\C^\pm:=\{z\in\C\colon \pm\im z>0\}$. We also denote by $M^\pm_p$, $1\leq p\leq\infty$, the linear sums of $H^\pm_p$ with the algebra $\mathcal R$ of all rational functions in $L_\infty(\R)$: \[ M^\pm_p=H^\pm_p+{\mathcal R}. \] The closure of $\mathcal R$ in the uniform norm is the algebra $C$ of all functions continuous on the one point compactification $\dot\R:=\R\cup\{\infty\}$ of $\R$, while the closure of $M^\pm_\infty$ coincides with $H^\pm_\infty +C$. The latter is thus a (closed) subalgebra of $L_\infty(\R)$. Finally, for any ring $\mathcal A$, we let $\mathcal GA$ stand for the set of its invertible elements.

Recall that a bounded linear operator $A\colon X\rightarrow Y$ acting between Banach spaces $X$ and $Y$ is {\em Fredholm} if its kernel $\ker A$ and cokernel $\coker A= Y/\im A$ are finite dimensional. Note that then $\dim\coker A=\dim\ker A^*$; the difference \[ \ind A=\dim\ker A-\dim\coker A \] is the {\em (Fredholm) index} of $A$. We say that two operators $A\colon X \rightarrow Y$ and $\widetilde{A}\colon \widetilde{X} \rightarrow \widetilde{Y}$ are {\em Fredholm equivalent} if either they are both Fredholm, with the same Fredholm index, or they are both non-Fredholm. Further, $A$ and $\widetilde{A}$ are {\em nearly Fredholm equivalent} if they are both Fredholm (with no relation imposed on their indices) or both non-Fredholm, and {\em strictly Fredholm equivalent} if they are Fredholm equivalent and, in case they are both Fredholm operators, $\dim\ker A=\dim\ker\widetilde{A}$ and $\dim\coker A=\dim\coker\widetilde{A}$ their kernels have the same dimensions, and their cokernels have the same dimension.

We are ultimately interested in Fredholm properties of {\em Toeplitz operators} $T_G$ with matrix symbols $G\in L_\infty^{n\times n}$ acting on $(H_p^+)^n$, $1<p<\infty$, and in particular their  relations with those of $T_{\det G}$.

Observe first of all that for $G\in (H^\pm_\infty+C)^{n\times n}$, the operators $T_G$ and $T_{\det G}$ are Fredholm equivalent. This follows directly from the Fredholmness criterion and index formula from \cite{Dou68}, see also \cite{Dou98} or \cite[Section 5.1]{LS}. Indeed, $T_G$ is Fredholm if and only if $G\in {\mathcal G}(H^\pm_\infty+C)^{n\times n}$, which in turn happens if and only if $\det G\in {\mathcal G}(H^\pm_\infty+C)$. Under this condition, $\ind T_G$ coincides, up to the sign, with the winding number of the harmonic extension of $\det G$ into $\C^\pm$ along horizontal lines sufficiently close to $\R$. The particular case of $G\in C^{n\times n}$ is of course simpler \cite{MP86}: $T_G$ is then Fredholm if and only if $G\in {\mathcal G}C$, while $\ind T_G=\ind T_{\det G}$ is the opposite of the winding number of $\det G$ over $\R$. On the other hand, already for piecewise continuous $G$ with just one point of discontinuity, starting with $n=2$, there are examples of both nearly Fredholm (but not Fredholm) and not even nearly Fredholm equivalent operators $T_G$, $T_{\det G}$. These examples can be easily constructed, based on the Fredholm crtierion and index formula for Toeplitz operators with (matrix) piecewise continuous symbols, see e.g. \cite{BSil06,CG,LS}.

Now suppose that \eq{i1.1} G=M_-G_0M_+^{-1}, \en where $M_\pm\in {\mathcal G}(H^\pm_\infty+C)^{n\times n}$. Then the Toeplitz operators $T_G$ and $T_{G_0}$ are nearly Fredholm equivalent \cite[Theorem 5.5]{LS}. These two operators are strictly Fredholm equivalent if $M_\pm\in {\mathcal G}(H^\pm_\infty)^{n\times n}$ \cite{LS,MP86}.

In particular, if $G_0=I$ in \eqref{i1.1}, we conclude from here that the operator $T_G$ is Fredholm whenever \eq{i1.3} GM_+=M_-\en for some $M_\pm\in {\mathcal G}(M^\pm_\infty)^{n\times n}$. We remark that, in this case, $\det G$ admits an analogous (scalar) representation \eq{i1.4} \det G= (\det M_-)(\det M_+)^{-1}, \en and $T_{\det G}$ is also Fredholm.

Note also that each of the $n$ columns of $M_+$, together with the corresponding column of $M_-$, due to \eqref{i1.3} yield a solution to the {\em Riemann-Hilbert problem} \eq{i1.5} G\Phi_+=\Phi_-, \quad \Phi_\pm\in (M^\pm_\infty)^n.\en

In the $2\times 2$ case it was shown in \cite{CDR} that only one solution to \eqref{i1.5} is needed to conclude that $T_G$ is Fredholm equivalent to $T_{\det G}$ --- as it happens e.g. in the case of continuous symbols, --- as long as $\Phi_\pm$ are {\em corona-type pairs}, left invertible over $M^\pm_\infty$. Moreover, the Fredholm equivalence is strict if $\Phi_\pm$ are left invertible over $H^\pm_\infty$, i.e., satisfy the corona condition in the corresponding half-planes $\C^\pm$. Thus it is possible to reduce the study of the Fredholm properties of Toeplitz operators with a $2\times 2$ matrix symbol to the study of analogous properties for a Toeplitz operator with a scalar symbol. The following question is then natural to ask: is it possible to generalize the results of \cite{CDR} to $n\times n$ matrix symbols if, instead of $n$ solutions to \eqref{i1.5}, we have $n-1$ solutions satisfying some form of a left invertibility condition? Or if we have an $n\times n$ symbol whose elements are continuous on $\dot\R$ except for one column or a row?

Left invertibility of $n\times m$ ($m\leq n$) matrix functions over $H^+_\infty$ was studied in \cite[Theorem 3.1]{Fuhr} where a generalization of the Carleson corona theorem to the case of matrix valued analytic functions was obtained by reduction to the scalar corona theorem via determinants. There it was shown, in particular, that if the determinants of all $m\times m$ submatrices satisfy a (scalar) corona condition, then the $n\times m$ matrix function is left invertible over $H^+_\infty$. We may therefore ask: is this also a necessary condition? Can we deduce analogous results in the more abstract context of a unital commutative ring, allowing a broader range of applications, and obtain expressions for the left inverses in terms of the solutions to an associated scalar corona-type problem?

We now turn to a different question concerning Toeplitz operators.  In the scalar case, they possess what is known as  {\em Coburn's property}, first observed in the Hilbert space (that is, for $p=2$) setting in the proof of Theorem~4.1 in \cite{Co}: for any Toeplitz operator with non-zero symbol $g\in L_\infty(\R)$, $T_g$ or $T^*_g$ has a zero kernel. It follows, in particular, that Fredholmness of $T_g$ implies its one-sided invertibility.

In this respect the situation is quite different when the symbol is matricial. The latter case presents much greater difficulties, some of which are naturally due to the non-commutativity of multiplication and the impossibility of division by vectorial functions. The degree of difficulty increases with the order of the matrix symbols involved, as reflected by the overwhelmingly greater number of results and papers concerning Toeplitz operators and related problems for $2\times 2$ symbols, as compared with the general $n\times n$ case.

Coburn's property is among many familiar properties holding in the scalar setting but not, in general, in the matricial setting, even in the simplest case when the symbol is diagonal. A natural question thus arises: what classes of Toeplitz operators with matricial symbols satisfy Coburn's property?

In this paper we address and relate all these apparently different questions, taking an algebraic point of view that enables us to unify and tackle different problems in different settings. This approach provides moreover a good illustration of how the study of Toeplitz operators knits together different areas of mathematics such as operator theory, complex analysis, and algebra.

The paper is organized in the following way. Section~\ref{s:osi} contains one-sided invertibility criteria for rectangular matrices with elements from an abstract commutative ring $\mathcal A$, along with formulas for the respective inverses. In Section~\ref{s:corona}, these results are recast for ${\mathcal A}$ being $H^\pm_\infty$ or $M^\pm_\infty$ with the help of corresponding corona theorems. These results are used in the main Section~\ref{s:osift}, where conditions are established on matrix functions $G$ guaranteeing that $T_G$ and $T_{\det G}$ are (nearly or strongly) Fredholm equivalent. It is  preceded by a short Section~\ref{s:Fred}  containing the necessary background information on the relations between Fredholmness of $T_G$ and factorization of $G$. Some special cases (unitary or orthogonal matrices, and matrix functions continuous except for one row or column) are considered in Section~\ref{s:Final}. Finally, in Section~\ref{s:ap} we deal with almost periodic symbols $G$.

\section{One sided invertibility of matrices over commutative rings}\label{s:osi}

In this section, ${\mathcal  A}$ is a unital commutative ring. We say that an element $a\in {\mathcal A}^{n\times k}$, $k\leq n$, is {\em left invertible over $\mathcal A$} if there exists $b\in {\mathcal A}^{k\times n}$ such that $ba=I_k$, the identity matrix in ${\mathcal A}^{k\times k}$. The notion of right invertibility over $\mathcal A$ is introduced in a similar way. The treatment of (one sided) invertiblity of square matrices with elements in $\mathcal A$ can be found in \cite[Chapter I]{Kru87}. We are interested in the case of ${\mathcal A}^{n\times k}$ with $k\neq n$.

For any matrix $\Phi\in {\mathcal A}^{n\times n}$ and $I\subset \{1,\ldots, n\}$, $\Phi_I$ will stand for its submatrix  obtained by keeping the $i$-th rows, $i\in I$ while deleting all other rows. If $m\leq n$, label by $I_1, I_2, \ldots, I_N$ all  $N={n\choose m}$ subsets of $\{1,\ldots,n\}$ with $m$ elements, and denote  $d_k^\Phi:=\det \Phi_{I_k}$.

\begin{lem}\label{l:C4.1} Let $\Phi\in {\mathcal A}^{n\times m}$ with $m\leq n$, and let $\Phi_I$ be some $m\times m$ submatrix of $\Phi$. Denote by $\Delta_{pq}^{\Phi_I}$ the determinant of the matrix obtained from $\Phi_I$ by deleting its $p$-th row and $q$-th column. Define $\Phi_I^*\in {\mathcal A}^{m\times n}$ by setting its $(q,p)$-entry according to the formula \eq{C2} \Phi_{qp}^*=\begin{cases} (-1)^{p+q}\Delta_{pq}^{\Phi_I} & \text{ if } p\in I, \\
0 & \text{ otherwise.}\end{cases} \en Then \[ \Phi_I^*\Phi = \det\Phi_I \diag[ (-1)^q]_{q=1,\ldots m}. \] \end{lem}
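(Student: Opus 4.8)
The plan is to prove the identity by a direct entrywise computation in which the product $\Phi_I^*\Phi$ is recognised, column by column, as a family of Laplace (cofactor) expansions of $\det\Phi_I$. Up to the insertion of zero columns in the positions outside $I$, the matrix $\Phi_I^*$ is (a signed variant of) the classical adjugate of the square submatrix $\Phi_I$, and the statement to be proved is the rectangular analogue of the familiar identity $\adj(A)\,A=\det(A)\,I$.

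First I would write $I=\{i_1<i_2<\ldots<i_m\}\subseteq\{1,\ldots,n\}$ and note that, by \eqref{C2}, the $p$-th column of $\Phi_I^*$ vanishes whenever $p\notin I$. Consequently, for $q,s\in\{1,\ldots,m\}$,
\[
(\Phi_I^*\Phi)_{qs}=\sum_{p=1}^{n}\Phi_{qp}^*\,\Phi_{ps}=\sum_{p\in I}\Phi_{qp}^*\,\Phi_{ps}.
\]
Since the rows of $\Phi$ indexed by $I$ are precisely the rows of $\Phi_I$, the entries $\Phi_{ps}$ with $p\in I$ form the $s$-th column of $\Phi_I$, while by \eqref{C2} the coefficients $\Phi_{qp}^*$, $p\in I$, are, up to the sign prescribed there, the minors of $\Phi_I$ obtained by deleting its $q$-th column together with the row corresponding to the index $p$. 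Hence $(\Phi_I^*\Phi)_{qs}$ equals, up to an overall factor $(-1)^q$, the cofactor expansion along the $q$-th column of the matrix obtained from $\Phi_I$ by replacing its $q$-th column with its $s$-th.

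Then I would invoke the standard column-expansion principle: that last determinant equals $\det\Phi_I$ when $s=q$, and equals the determinant of a matrix with two coincident columns, hence $0$, when $s\neq q$. Combined with the factor $(-1)^q$ coming from \eqref{C2}, this gives $(\Phi_I^*\Phi)_{qs}=(-1)^q(\det\Phi_I)\,\delta_{qs}$, which is exactly the asserted formula; the degenerate case $m=1$, in which the relevant minors are empty determinants equal to $1$, is checked directly. The one delicate point is the bookkeeping of signs and indices: one must distinguish the position of a row within the square matrix $\Phi_I$ from its label as a row of $\Phi$, and verify that the sign convention in \eqref{C2} is precisely the one turning the partial sums above into genuine cofactor expansions of $\det\Phi_I$ (with the extra factor $(-1)^q$). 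Once the signs are lined up, the vanishing of the off-diagonal entries follows automatically from the repeated-column principle, and the whole argument is purely formal, using nothing about $\mathcal A$ beyond its being a unital commutative ring.
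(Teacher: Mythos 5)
Your proposal is correct and follows essentially the same route as the paper's own (two-line) proof: the diagonal entries of $\Phi_I^*\Phi$ are identified with cofactor expansions of $\det\Phi_I$ along a column, and the off-diagonal entries vanish as determinants of matrices with two coinciding columns. Your write-up is merely more explicit about the row-label versus row-position bookkeeping and the sign conventions of \eqref{C2}, which is precisely what the paper's proof leaves implicit.
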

\begin{proof}By construction, \[ \sum_{p=1}^m \Phi_{qp}^*\Phi_{pq}=(-1)^q\det\Phi_I, \quad q=1,\ldots,m,\] while $\sum_{p=1}^m \Phi_{qp}^*\Phi_{pl}=0$ for $q\neq l$ as the determinant of a matrix with two coinciding columns.  \end{proof}

\begin{thm}\label{th:C4.2} {\em (i)} An element $\Phi$ of ${\mathcal A}^{n\times m}$ is left invertible over $\mathcal A$ if and only if the column \[ \Delta:=
\begin{bmatrix} d_1^\Phi\\ d_2^\Phi \\ \vdots \\ d_N^\Phi\end{bmatrix} \] is left invertible in $\mathcal A$.

{\em (ii)} If $\Phi\in {\mathcal A}^{n\times m}$ is left invertible over $\mathcal A$ with left inverse $\Psi\in{\mathcal A}^{m\times n}$, then the row \[ \Delta^*=[d_1^{\Psi^T}, d_2^{\Psi^T}, \ldots, d_N^{\Psi^T}] \] is a left inverse of $\Delta$  over $\mathcal A$.

{\em (iii)} If $\Delta$ is left invertible in $\mathcal A$ with a left inverse $\Delta^*=[\Delta_1^*,\Delta_2^*, \ldots, \Delta_N^*]$, then \[ \Psi =\diag [(-1)^q]_{q=1,\ldots,n}\sum_{k=1}^N\Delta_k^*\Phi_{I_k}^*,  \] where $\Phi_{I_k}^*$ are defined in accordance with \eqref{C2} with $I=I_k$, is a left inverse of $\Phi$.    \end{thm}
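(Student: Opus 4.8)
The plan is to establish the three parts in the order (iii), (ii), (i), and then read off (i) by combining the explicit constructions produced in the other two: a left inverse of $\Delta$ yields, via (iii), a left inverse of $\Phi$, while a left inverse of $\Phi$ yields, via (ii), a left inverse of $\Delta$.

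For (iii), the starting point is a reformulation of Lemma~\ref{l:C4.1}. Put $D:=\diag[(-1)^q]_{q=1,\ldots,m}$ and left-multiply the identity $\Phi_{I_k}^*\Phi=\det\Phi_{I_k}\,D$ of Lemma~\ref{l:C4.1} by $D$; since $D^2=I_m$ this gives $D\Phi_{I_k}^*\Phi=d_k^\Phi I_m$ for every $k$. Because $\mathcal{A}$ is commutative, the scalars $\Delta_k^*\in\mathcal{A}$ pass freely through matrix products, so for $\Psi=D\sum_{k=1}^N\Delta_k^*\Phi_{I_k}^*$ one computes $\Psi\Phi=\sum_{k=1}^N\Delta_k^*\,D\Phi_{I_k}^*\Phi=\big(\sum_{k=1}^N\Delta_k^*d_k^\Phi\big)I_m=(\Delta^*\Delta)I_m=I_m$, which is exactly the claim.

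For (ii), the engine is the Cauchy--Binet formula, which is a polynomial identity in the matrix entries and therefore valid over any commutative ring: for $\Psi\in\mathcal{A}^{m\times n}$ and $\Phi\in\mathcal{A}^{n\times m}$ one has $\det(\Psi\Phi)=\sum_{k=1}^N\big(\det\Psi^{(I_k)}\big)\big(\det\Phi_{I_k}\big)$, where $\Psi^{(I_k)}$ denotes the $m\times m$ submatrix of $\Psi$ formed by the columns with indices in $I_k$. A short check shows $(\Psi^T)_{I_k}=\big(\Psi^{(I_k)}\big)^T$, hence $d_k^{\Psi^T}=\det\Psi^{(I_k)}$, so the right-hand side is precisely $\Delta^*\Delta$; and since $\Psi\Phi=I_m$ the left-hand side equals $1$, giving $\Delta^*\Delta=1$. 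Part (i) now follows: if $\Phi$ is left invertible with left inverse $\Psi$, then by (ii) $\Delta$ is left invertible; conversely, if $\Delta$ is left invertible then by (iii) $\Phi$ is left invertible.

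I do not anticipate a genuine obstacle. The one place demanding care is the index bookkeeping — verifying that the ordered choices of rows and columns defining $d_k^\Phi$, $d_k^{\Psi^T}$ and the adjugate-type matrices $\Phi_{I_k}^*$ are consistent with the conventions in Lemma~\ref{l:C4.1} and in the Cauchy--Binet identity, and that the sign diagonal $D$ is inserted on the correct side so that $D^2=I_m$ cancels the signs appearing in Lemma~\ref{l:C4.1}.
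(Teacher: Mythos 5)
Your proposal is correct and follows essentially the same route as the paper: part (ii) via the Cauchy--Binet identity applied to $\Psi\Phi=I_m$, part (iii) by combining Lemma~\ref{l:C4.1} with the sign diagonal $D$ satisfying $D^2=I_m$, and part (i) as an immediate consequence of the other two. Your version is merely more explicit about the index bookkeeping (e.g.\ identifying $d_k^{\Psi^T}$ with $\det\Psi^{(I_k)}$) and correctly uses the $m\times m$ sign diagonal where the paper's displayed formula has a harmless $n$-for-$m$ typo.
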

\begin{proof} (ii) If $\Psi\Phi=I_m$, then the Cauchy-Binet formulas show that $\Delta^*\Delta=1$.

(iii) We have \[ \Psi\Phi =\diag [(-1)^q]_{q=1,\ldots,n}\sum_{k=1}^N\Delta_k^*\Phi_{I_k}^*\Phi  \] which, by Lemma~\ref{l:C4.1}, is equal to
\[ \diag [(-1)^q]_{q=1,\ldots,n}\sum_{k=1}^N(\Delta_k^* d_{I_k}^\Phi) \diag [(-1)^q]_{q=1,\ldots,n}=I_m.\]

(i) is an immediate consequence of (ii) and (iii). \end{proof}

The ``if'' part of Theorem~\ref{th:C4.2} is an abstract version of its particular case when $\mathcal A$ is the algebra of bounded analytic functions on the unit disc contained in the proof of \cite[Theorem 3.1]{Fuhr}; the ``only if'' part shows moreover that the converse is true.

In what follows, we adapt the notation to the special case $m=n-1$ which is of particular relevance to  the main results of the paper.

Given $\Phi \in {\mathcal A}^{n \times (n-1)}$ we denote by $\Delta_{p; \cdot} (\Phi)$ the
determinant of the $(n-1) \times (n-1)$ matrix obtained by omitting the
row $p$ in $\Phi$; we denote by $\Delta_{p,s;j} (\Phi)$ the
determinant of the $(n-2) \times (n-2)$ submatrix of $\Phi$ obtained
by omitting the rows $p$ and $s$ ($p\neq s$) and column $j$ (we
take $p,s\in \{1,2,\ldots, n\}$, $j\in \{1,2,\ldots, n-1\}$). Analogously, for
$\Psi \in   {\mathcal A}^{(n-1) \times n}$, we use the notation
$\Delta_{\cdot; p}(\Psi)$ for the determinant of the  $(n-1) \times
(n-1)$ matrix obtained by omitting the column  $p$ in $\Psi$; and
$\Delta_{j;p,s} (\Phi)$ stands for the determinant of the $(n-2) \times
(n-2)$ submatrix of $\Psi$ obtained by omitting the columns $p$ and
$s$ ($p\neq s$) and row $j$.

\begin{cor}\label{aug86}
An element $\Phi\in {\mathcal  A}^{n \times (n-1)}$ is left invertible
over ${\mathcal  A}$ if and only if  the column
\begin{equation}\label{aug83} \left[\begin{array}{c}\Delta_{1;\cdot}(\Phi) \\  \vdots\\
\Delta_{n;\cdot}(\Phi)\end{array}\right] \end{equation}  is left invertible over
${\mathcal  A}$.

Moreover, in this case a left inverse of $\Phi$ is given by
\begin{equation}\label{I.3} \Psi=\left[\begin{array}{c} \Psi_1 \\ \Psi_2 \\ \vdots \\
\Psi_{n-1}\end{array}\right],
\qquad \Psi_j \in {\mathcal  A}^{1 \times n}, \quad j=1,2,\ldots ,n-1, \end{equation}
with
\eq{I.4}
\Psi_j=(-1)^j \Delta^*
\left[\begin{array}{ccccc} 0 & \Delta_{1,2;j}  &
\Delta_{1,3;j}& \ldots & \Delta_{1,n;j} \\
-\Delta_{1,2;j} & 0& \Delta_{2,3;j}& \ldots & \Delta_{2,n;j} \\
-\Delta_{1,3;j}& -\Delta_{2,3;j} & 0& \ldots & \Delta_{3,n;j} \\
\vdots & \vdots & \vdots & \ddots & \vdots  \\
-\Delta_{1,n;j} & -\Delta_{2,n;j}& -\Delta_{3,n;j}& \ldots & 0 \end{array}\right] \, \cdot \,
\widetilde{I}_n, \en
for $j=1,2,\ldots, n-1$, where
$\Delta_{p,s;j}:= \Delta_{p,s;j}(\Phi)$,
\eq{I.5}
\Delta^*=\left[ \Delta_{1;.}(\Psi^T), \ldots \Delta_{n;.}(\Psi^T)\right] \en
is a left inverse of {\em \eqref{aug83}} over ${\mathcal  A}$, and
\begin{equation}\label{I.6}
 \widetilde{I}_n ={\rm diag}\, [1, -1, 1, \ldots, (-1)^{n+1}]. \end{equation}
\end{cor}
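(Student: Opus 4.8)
The plan is to obtain Corollary~\ref{aug86} as a direct specialization of Theorem~\ref{th:C4.2} to the case $m=n-1$, where $N={n\choose n-1}=n$ and the index sets are $I_k=\{1,\dots,n\}\setminus\{k\}$, so that $d_k^\Phi=\Delta_{k;\cdot}(\Phi)$ and the column $\Delta$ in Theorem~\ref{th:C4.2} is precisely \eqref{aug83}. The equivalence of left invertibility of $\Phi$ with that of \eqref{aug83} is then immediate from part~(i). What remains is to check that the explicit left inverse \eqref{I.3}--\eqref{I.6} is the same as the one produced by part~(iii), i.e.\ $\Psi=\diag[(-1)^q]_{q=1,\dots,n-1}\sum_{k=1}^n\Delta_k^*\Phi_{I_k}^*$ with $\Phi_{I_k}^*$ given by \eqref{C2}.

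First I would unwind the definition of $\Phi_{I_k}^*$ from \eqref{C2} in this setting: since $I_k$ omits row $k$, the $(q,p)$-entry of $\Phi_{I_k}^*$ is zero when $p=k$, and otherwise equals $(-1)^{p+q}$ times the determinant of $\Phi_{I_k}$ with its $q$-th row and the column corresponding to $p$ deleted. The key bookkeeping point is that the rows of $\Phi_{I_k}$ are indexed by $\{1,\dots,n\}\setminus\{k\}$, so deleting the ``$p$-th'' row of $\Phi_{I_k}$ together with the $q$-th column gives an $(n-2)\times(n-2)$ minor of $\Phi$ obtained by deleting rows $k$ and $p$ and column $q$ — that is, $\Delta_{p,k;q}(\Phi)$ up to the sign conventions built into \eqref{I.4}. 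Then I would fix the column index $j$ and assemble the $j$-th row $\Psi_j$ of $\Psi$ by collecting, for each $k$, the contribution $\Delta_k^*$ times the $j$-th row of $\Phi_{I_k}^*$, prefixed by $(-1)^j$ from the diagonal factor $\diag[(-1)^q]$. Comparing this with the matrix product in \eqref{I.4}, one sees that the antisymmetric matrix $[\Delta_{p,s;j}]$ (with the sign pattern $0$ on the diagonal, $+\Delta_{p,s;j}$ for $p<s$, $-\Delta_{p,s;j}$ for $p>s$) is exactly the array whose $(k,\ell)$-entry packages the minor $\Delta_{k,\ell;j}(\Phi)$ with the correct $\pm$ coming from $(-1)^{p+q}$ in \eqref{C2} after re-indexing, and that multiplication on the left by $\Delta^*$ and on the right by $\widetilde I_n$ reproduces $\sum_k\Delta_k^*(\text{$j$-th row of }\Phi_{I_k}^*)$.

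The reindexing is the only real obstacle: in $\Phi_{I_k}$ the rows carry their original labels from $\{1,\dots,n\}\setminus\{k\}$, whereas the cofactor formula \eqref{C2} refers to them by consecutive position $1,\dots,n-1$, so the sign $(-1)^{p+q}$ must be tracked carefully when passing back to absolute indices $p,k\in\{1,\dots,n\}$; this is precisely what the alternating placement of signs in \eqref{I.4} and the factor $\widetilde I_n$ from \eqref{I.6} absorb. Once this identification is made, the conclusion that $\Psi\Phi=I_{n-1}$ follows verbatim from the computation in the proof of Theorem~\ref{th:C4.2}(iii): Lemma~\ref{l:C4.1} gives $\Phi_{I_k}^*\Phi=\Delta_{k;\cdot}(\Phi)\,\diag[(-1)^q]$, so $\Psi\Phi=\diag[(-1)^q]\big(\sum_k\Delta_k^*\Delta_{k;\cdot}(\Phi)\big)\diag[(-1)^q]=I_{n-1}$ because $\Delta^*$ in \eqref{I.5} is a left inverse of \eqref{aug83} by part~(ii) of the theorem applied to $\Psi^T$. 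I would keep the write-up short, stating the substitutions $m=n-1$, $N=n$, $I_k=\{1,\dots,n\}\setminus\{k\}$ explicitly, noting the translation of notation $d_k^\Phi=\Delta_{k;\cdot}(\Phi)$ and $\Phi_{qp}^*\leftrightarrow\pm\Delta_{\cdot,\cdot;\cdot}$, and then simply invoking Theorem~\ref{th:C4.2}.
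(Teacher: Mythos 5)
Your proposal is correct and follows exactly the route the paper intends: Corollary~\ref{aug86} is stated without a separate proof precisely because it is the specialization of Theorem~\ref{th:C4.2} to $m=n-1$, $N=n$, $I_k=\{1,\ldots,n\}\setminus\{k\}$, with $d_k^\Phi=\Delta_{k;\cdot}(\Phi)$, and formula \eqref{I.4} is just the formula of part (iii) rewritten so that the antisymmetric sign pattern and the factor $\widetilde I_n$ account for the passage from positions within $\Phi_{I_k}$ to absolute row labels. Your identification of that reindexing as the only nontrivial point, and of how the signs are absorbed, matches the intended argument.
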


The following result will be crucial in establishing relations relations between left invertibility of some
matrix functions and Fredholmness of Toeplitz operators.

\begin{thm}\label{aug111}
Let $\Phi\in {\mathcal  A}^{n \times (n-1)}$ be left invertible over
${\mathcal A}$, and let $\Psi\in {\mathcal  A}^{(n-1) \times n}$ be its
left inverse:
\begin{equation}\label{I.15} \Psi\Phi=I_{n-1}. \end{equation}
Let moreover
\begin{equation}\label{I.16}
\Phi_e=\left[\Phi \ \ N \right], \quad
\Psi_e=\left[\begin{array}{c} \Psi \\ \widetilde{N} \end{array}\right], \quad
\mbox{with} \quad N\in {\mathcal  A}^{n \times 1}, \ \
\widetilde{N}\in {\mathcal
A}^{1 \times n}.
\end{equation}
Then:
\begin{itemize}
\item[(i)] if
\begin{equation}\label{I.18}
N=\left[\begin{array}{c} N_1\\ N_2\\ \vdots \\ N_n\end{array}\right],\quad N_j=(-1)^{j-1} \Delta_{\cdot;j}(\Psi),
\end{equation}
then
\begin{equation}\label{I.19}
\Psi_e\Phi_e=\left[\begin{array}{cc} I_{n-1} & 0_{(n-1)\times 1} \\
\widetilde{N}\Phi & \widetilde{N}N
\end{array}\right].
\end{equation}
\item[(ii)]
if
\begin{equation}\label{I.21}
\widetilde{N}=\left[\widetilde{N}_1 \ \ \widetilde{N}_2 \  \ \ldots \ \
\widetilde{N}_n \right] \quad \mbox{with } \quad \widetilde{N}_j=(-1)^{j-1}\Delta_{j;\cdot}(\Phi),
\end{equation}
then
\begin{equation}\label{I.22}
\Psi_e\Phi_e =\left[\begin{array}{cc} I_{n-1} & \Psi N \\ 0_{1\times (n-1)} & \widetilde{N}N
\end{array}\right] .\end{equation} 
\item[(iii)] if $N$ and $\widetilde{N}$ satisfy (\ref{I.18}) and (\ref{I.21}), respectively, then
\begin{equation}\label{I.24}
\Psi_e\Phi_e=\Phi_e\Psi_e=I_n \quad \mbox{and} \quad {\rm det}\, \Phi_e={\rm det}\, \Psi_e =(-1)^{n-1}.
\end{equation}
\end{itemize}
\end{thm}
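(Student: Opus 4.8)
The plan is to treat the three items as successive refinements of a single computation: multiply the block matrices $\Psi_e$ and $\Phi_e$ and identify each block. Writing out
\[
\Psi_e\Phi_e=\begin{bmatrix}\Psi\Phi & \Psi N\\ \widetilde N\Phi & \widetilde N N\end{bmatrix},
\]
the top-left block is $I_{n-1}$ by \eqref{I.15}, so everything hinges on the off-diagonal blocks $\Psi N$ and $\widetilde N\Phi$. For (i) I would show that the choice \eqref{I.18} forces $\Psi N=0_{(n-1)\times 1}$: the $i$-th entry of $\Psi N$ is $\sum_{j=1}^n(-1)^{j-1}\Psi_{ij}\,\Delta_{\cdot;j}(\Psi)$, which is exactly the Laplace (cofactor) expansion along the $i$-th row of the determinant of the $n\times n$ matrix obtained from $\Psi$ by repeating its $i$-th row, hence vanishes as a determinant with two equal rows. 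This yields \eqref{I.19}. Item (ii) is the transposed statement: with \eqref{I.21}, the $j$-th entry of $\widetilde N\Phi$ is a cofactor expansion along a column of a matrix with two equal columns of $\Phi^T$, so $\widetilde N\Phi=0_{1\times(n-1)}$, giving \eqref{I.22}.

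For (iii), imposing both \eqref{I.18} and \eqref{I.21} simultaneously kills both off-diagonal blocks at once, so $\Psi_e\Phi_e=\diag[I_{n-1},\widetilde N N]$. The key remaining point is that the scalar $\widetilde N N=\sum_{j=1}^n(-1)^{j-1}\Delta_{j;\cdot}(\Phi)\cdot(-1)^{j-1}\Delta_{\cdot;j}(\Psi)=\sum_{j=1}^n \Delta_{j;\cdot}(\Phi)\,\Delta_{\cdot;j}(\Psi)$ equals $1$. To see this I would use the Cauchy–Binet-type identity already exploited in Theorem~\ref{th:C4.2}: the column $[\Delta_{1;\cdot}(\Phi),\ldots,\Delta_{n;\cdot}(\Phi)]^T$ is the vector $\Delta$ for $\Phi$ (with $m=n-1$, $N=n$, signs absorbed), and $[\Delta_{\cdot;1}(\Psi),\ldots,\Delta_{\cdot;n}(\Psi)]$ is, up to the same alternating signs, the row $\Delta^*$ of Theorem~\ref{th:C4.2}(ii) built from the left inverse $\Psi$; that theorem gives $\Delta^*\Delta=1$, and one checks the two sets of signs cancel in pairs so that $\widetilde N N=1$. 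Hence $\Psi_e\Phi_e=I_n$.

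Once $\Psi_e\Phi_e=I_n$ is established with $\Psi_e,\Phi_e\in\mathcal A^{n\times n}$ square, the rest is standard commutative-ring linear algebra: from $\Psi_e\Phi_e=I_n$ we get $\det\Psi_e\cdot\det\Phi_e=1$, so both determinants are units in $\mathcal A$; then the adjugate formula $\adj(\Phi_e)\,\Phi_e=\det\Phi_e\, I_n$ shows $\Phi_e$ is also left invertible, and a one-sided inverse of a square matrix over a commutative ring is automatically two-sided (multiply $\Phi_e\Psi_e$ on the left by $\Psi_e$ and use $\Psi_e\Phi_e=I_n$ to cancel), giving $\Phi_e\Psi_e=I_n$ as well. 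It remains only to pin down that $\det\Phi_e=\det\Psi_e=(-1)^{n-1}$ rather than some other unit. For this I would compute $\det\Phi_e$ directly by Laplace expansion along its last column $N$: the cofactor of the entry $N_j=(-1)^{j-1}\Delta_{\cdot;j}(\Psi)$... — more cleanly, expand $\det\Phi_e$ along the last column and recognize it as $\sum_j (-1)^{j+n}N_j\,\Delta_{j;\cdot}(\Phi)=(-1)^{n-1}\sum_j\Delta_{\cdot;j}(\Psi)\Delta_{j;\cdot}(\Phi)=(-1)^{n-1}\cdot(\widetilde N N)=(-1)^{n-1}$, using the sign bookkeeping $(-1)^{j+n}(-1)^{j-1}=(-1)^{n-1}$ and the identity $\widetilde N N=1$ from the previous paragraph; the computation for $\det\Psi_e$ (expand along the last row $\widetilde N$) is identical mutatis mutandis.

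The main obstacle is purely bookkeeping: correctly tracking the three layers of alternating signs — the $(-1)^{j-1}$ in \eqref{I.18} and \eqref{I.21}, the $(-1)^{q}$ diagonal factors inherited from Lemma~\ref{l:C4.1} and Theorem~\ref{th:C4.2}, and the Laplace-expansion signs $(-1)^{p+q}$ — and verifying they conspire so that $\widetilde N N=+1$ and the determinants come out to exactly $(-1)^{n-1}$. I expect no conceptual difficulty beyond this; the vanishing of the off-diagonal blocks is a clean "determinant with a repeated row/column" argument, and the two-sided invertibility and unit-determinant conclusions are formal once $\Psi_e\Phi_e=I_n$ is in hand.
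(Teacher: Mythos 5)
Your proof is correct, and its overall skeleton (block-multiply $\Psi_e\Phi_e$, kill the off-diagonal blocks, identify $\widetilde N N$ via Cauchy--Binet, invoke the commutative-ring fact that a one-sided inverse of a square matrix is two-sided, and get the determinant by Laplace expansion along the last column/row) coincides with the paper's. The one place where you genuinely diverge is the vanishing of $\Psi N$ (and of $\widetilde N\Phi$): you use the ``alien cofactor'' identity directly --- the $i$-th entry of $\Psi N$ is the first-row expansion of the determinant of the $n\times n$ matrix obtained by prepending the $i$-th row of $\Psi$ to $\Psi$, which vanishes as a determinant with two equal rows. The paper instead first uses Corollary~\ref{aug86} to append a row to $\Psi$ producing an invertible $\Psi_0$ with $\det\Psi_0=1$, observes that $N$ is the first column of $\Psi_0^{-1}$, and reads $\Psi N=0$ off $\Psi_0\Psi_0^{-1}=I_n$. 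Your version is more elementary: it is a pure polynomial identity valid for any $\Psi$ and needs neither the left invertibility of $\Psi^T$ nor Corollary~\ref{aug86}. Likewise, for $\det\Phi_e=(-1)^{n-1}$ you simply reuse the Cauchy--Binet sum $\sum_j\Delta_{\cdot;j}(\Psi)\Delta_{j;\cdot}(\Phi)=1$, whereas the paper re-derives that sum as the $(n,n)$ entry of $(\adj\Phi_e)(\adj\Psi_e)$ --- an extra step that your route shows is unnecessary. The sign bookkeeping you flag as the main hazard does work out exactly as you state: $(-1)^{j-1}(-1)^{j-1}=1$ in $\widetilde N N$ and $(-1)^{j+n}(-1)^{j-1}=(-1)^{n-1}$ in the expansion of $\det\Phi_e$.
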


\begin{proof} (i) Since $\Psi$ is right invertible, $\Psi^T$ is left invertible. Therefore, by Corollary~\ref{aug86}, \[ \begin{bmatrix}\Delta_{1;.}(\Psi^T)\\ \vdots\\ \Delta_{n;.}(\Psi^T)\end{bmatrix} \] is left invertible over $\mathcal A$. Let $c_1,\ldots c_n\in \mathcal A$ be such that $\sum_{j=1}^nc_j\Delta_{j;.}(\Psi^T)=1$. Then, setting \[ \Psi_1= [c_1, -c_2, \ldots, (-1)^nc_n] \] and using cofactor expansion across the first row, we see that $\det \Psi_0=1$, where
$$ \Psi_0:=\left[\begin{array}{c} \Psi_1 \\ \Psi \end{array}\right]. $$
Thus, $\Psi_0$ is invertible, $N$ is the first column of $\Psi_0^{-1}$, and the equality $\Psi N=0$ follows.
Part (ii) is proved analogously. For Part (iii) note that the Cauchy-Binet
formula yields
$$ \sum_{j=1}^n \Delta_{\cdot;j}(\Psi)\Delta_{j;\cdot}(\Phi)=1. $$
Thus, taking into account parts (i) and (ii), we have
$\Phi_e\Psi_e=I_n$. Then also $\Psi_e\Phi_e=I_n$ (this is a general
property of matrices with elements in unital
commutative rings, see e.g. \cite{Artin}). Finally, expanding ${\rm det}\,
\Phi_e$ along the last column, we obtain
\begin{equation}\label{dec121}  (-1)^{n-1} {\rm det}\, \Phi_e = \sum_{j=1}^n \Delta_{\cdot, j}(\Psi) \Delta_{j,\cdot}(\Phi),
\end{equation}
which is equal to the $(n,n)$ entry of the product $({\rm adj}\, \Phi_e)
\, \cdot \, ({\rm adj}\, \Psi_e)$, where we denote by ${\rm adj}\, X\in
{\mathcal A}^{n \times n}$ the algebraic adjoint (adjugate) of a matrix
$X\in  {\mathcal A}^{n \times n}$. Since $\Psi_e$ and $\Phi_e$ are
inverses of each other,  then so are ${\rm adj}\, \Phi_e$
and ${\rm adj}\, \Psi_e$, and
(\ref{dec121}) is equal to $1$, as claimed. \end{proof}

\section{ Corona tuples and one sided invertibility in \boldmath{$H^\pm_\infty$} and \boldmath{$M^\pm_\infty$}}\label{s:corona}

Having the results of Section~\ref{s:osi} in mind, we now establish necessary and sufficient conditions for the left invertibility of $n$-tuples in some concrete unital algebras of interest: $H^\pm_\infty$ and $M^\pm_\infty$.

The {\em corona tuples}, with respect to these algebras, are defined as follows:

$$ HCT^\pm_n:=\left\{[h_1^\pm, h_2^\pm,\ldots, h_n^\pm]\colon h_j^\pm \in H_\infty^\pm \quad \mbox{and}
\quad \inf_{z\in \C^\pm} \left(\sum_{j=1}^n |h_j^\pm (z)|\right)>0\right\}, $$
\[ MCT^\pm_n := \left\{[r_1h_1^\pm, r_2h_2^\pm,\ldots, r_nh_n^\pm]\colon [h_1^\pm, \ldots, h_n^\pm]\in
HCT^\pm_n \text{ and } r_1, \ldots, r_n\in \mathcal  {GR}\right\}. \]

\begin{thm}\label{aug82}
${\rm (a)}$  Let $h_1^\pm, h_2^\pm,\ldots, h_n^\pm\in H_\infty^\pm$. Then  $[h_1^\pm, h_2^\pm,\ldots, h_n^\pm]\in
HCT^\pm_n$ if and only if
$\left[\begin{array}{c} h_1^\pm \\ \vdots \\ h_n^\pm \end{array}\right]$ is left invertible over
$H_\infty^\pm$, i.e. there exist $g_j\in H_\infty^\pm$, $j=1,2,\ldots, n$, such that
$\sum_{j=1}^n g_jh_j=1. $

${\rm (b)}$  The following statements are equivalent for
$h_1^\pm, h_2^\pm,\ldots, h_n^\pm\in M^\pm_\infty$:
\begin{itemize}
\item[(1)]
$[h_1^\pm, h_2^\pm,\ldots, h_n^\pm]\in MCT^\pm_n$;
\item[(2)] There exist $r\in \mathcal  {GR}$ and $[g_1,\ldots, g_n]\in HCT_n^\pm$ such that $h_j^\pm=rg_j$, $j=1,2,\ldots, n$;
\item[(3)]
$\left[\begin{array}{c} h_1^\pm \\ \vdots \\ h_n^\pm \end{array}\right]$ is left invertible over
$M_\infty^\pm$.
\end{itemize}
\end{thm}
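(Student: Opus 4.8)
The plan is to prove Theorem~\ref{aug82} in two parts, with part (a) as the workhorse and part (b) reducing to it via a rationalization trick.

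\textbf{Part (a).} The ``if'' direction is essentially trivial: if there exist $g_j \in H_\infty^\pm$ with $\sum_{j=1}^n g_j h_j^\pm = 1$, then evaluating at any $z \in \C^\pm$ gives $1 \le \sum_j |g_j(z)|\,|h_j^\pm(z)| \le \bigl(\max_j \|g_j\|_\infty\bigr) \sum_j |h_j^\pm(z)|$, so $\inf_{z\in\C^\pm}\sum_j |h_j^\pm(z)| \ge \bigl(\max_j\|g_j\|_\infty\bigr)^{-1} > 0$, which is the corona tuple condition. The ``only if'' direction is precisely the Carleson corona theorem for the half-plane $\C^\pm$ (equivalently, for the disc, after a conformal map that sends $H_\infty$ of the disc isomorphically onto $H_\infty^\pm$): the condition $\inf_{z\in\C^\pm}\sum_j |h_j^\pm(z)| > 0$ together with $\sup_j \|h_j^\pm\|_\infty < \infty$ guarantees the existence of $g_j \in H_\infty^\pm$ solving the Bezout equation $\sum_j g_j h_j^\pm = 1$. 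I would cite Carleson's theorem (and perhaps note that for the half-plane it follows by the standard conformal equivalence $\C^+ \cong \mathbb{D}$). That is the whole of part (a): left invertibility of the column $[h_1^\pm,\dots,h_n^\pm]^T$ over $H_\infty^\pm$ means exactly the existence of such a row $[g_1,\dots,g_n]$, so the two statements coincide.

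\textbf{Part (b).} I would prove the cycle $(1)\Rightarrow(2)\Rightarrow(3)\Rightarrow(1)$. For $(1)\Rightarrow(2)$: if $[h_1^\pm,\dots,h_n^\pm]\in MCT_n^\pm$, then by definition $h_j^\pm = r_j g_j$ with $[g_1,\dots,g_n]\in HCT_n^\pm$ and $r_j \in \mathcal{GR}$. Put $r := \prod_{j=1}^n r_j \in \mathcal{GR}$ and $\tilde g_j := (\prod_{i\ne j} r_i) g_j$. Since each $\prod_{i\ne j} r_i \in \mathcal{GR} \subset \mathcal{G}(M_\infty^\pm)$ and, more to the point, each $\prod_{i \ne j} r_i$ is a \emph{bounded} rational function on $\C^\pm$ bounded away from zero there, the tuple $[\tilde g_1,\dots,\tilde g_n]$ is again in $HCT_n^\pm$ — one checks the two defining conditions: boundedness is clear, and the corona infimum is preserved because $\sum_j |\tilde g_j(z)| = \sum_j |\prod_{i\ne j} r_i(z)|\,|g_j(z)| \ge \bigl(\min_j \inf_{\C^\pm}|\prod_{i\ne j} r_i|\bigr)\sum_j |g_j(z)|$, and the minimum of these infima is positive since each $\prod_{i \ne j} r_i$ is invertible in $\mathcal R$. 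Then $h_j^\pm = r \tilde g_j$, giving (2). The implication $(2)\Rightarrow(3)$: given $h_j^\pm = r g_j$ with $r \in \mathcal{GR}$ and $[g_1,\dots,g_n]\in HCT_n^\pm$, apply part (a) to get $\psi_j \in H_\infty^\pm$ with $\sum_j \psi_j g_j = 1$; then $\sum_j (r^{-1}\psi_j) h_j^\pm = \sum_j r^{-1}\psi_j r g_j = \sum_j \psi_j g_j = 1$, and $r^{-1}\psi_j \in M_\infty^\pm$ since $r^{-1}\in\mathcal R$ and $\psi_j \in H_\infty^\pm$, so the column is left invertible over $M_\infty^\pm$, i.e. (3).

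\textbf{The main obstacle: $(3)\Rightarrow(1)$.} This is the one genuinely non-formal step. We are given $g_j \in M_\infty^\pm$ with $\sum_j g_j h_j^\pm = 1$ and must produce a \emph{common} rational factor and an $H_\infty^\pm$ corona tuple. Write $g_j = g_j^+ + \rho_j$ and $h_j^\pm = h_j^+ + \sigma_j$ with $g_j^+, h_j^+ \in H_\infty^\pm$ and $\rho_j, \sigma_j \in \mathcal R$. The zeros of the $h_j^\pm$ in $\C^\pm$ (which may occur through the rational summands) are the obstruction to the tuple already lying in $HCT_n^\pm$; the rational parts also carry poles in $\C^\pm$. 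The strategy I expect to work is: the Bezout equation $\sum_j g_j h_j^\pm = 1$ forces the common poles and common zeros in $\C^\pm$ to be controlled, so there is a single scalar rational function $r \in \mathcal{GR}$ — built as a finite Blaschke-type product over the relevant points of $\dot\R \cup \C^\pm$ together with a unimodular rational correction — such that $r^{-1} h_j^\pm \in H_\infty^\pm$ for all $j$ and the resulting tuple $[r^{-1}h_1^\pm,\dots,r^{-1}h_n^\pm]$ satisfies the corona condition in $\C^\pm$. One must verify (i) that $r^{-1}h_j^\pm$ is genuinely in $H_\infty^\pm$ (no residual poles, bounded at infinity), using that $M_\infty^\pm = H_\infty^\pm + \mathcal R$ and that a rational function with no poles in $\overline{\C^\pm}$ lies in $H_\infty^\pm$; and (ii) that dividing out $r$ removes exactly the points where $\sum_j |h_j^\pm|$ fails to be bounded below, which is where the Bezout identity is used — at a common zero $z_0 \in \C^\pm$ of all $h_j^\pm$ we would get $0 = \sum_j g_j(z_0) h_j^\pm(z_0) = 1$ after suitable interpretation, a contradiction, so the common zeros are confined to $\dot\R$ and can be absorbed into $r$ along with the poles. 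Handling the point $\infty$ and the precise bookkeeping of orders of zeros/poles there is the fiddly part; once $r$ is constructed, $[r^{-1}h_1^\pm,\dots,r^{-1}h_n^\pm]\in HCT_n^\pm$ and $h_j^\pm = r\,(r^{-1}h_j^\pm)$ exhibits the tuple as an element of $MCT_n^\pm$, completing the cycle.
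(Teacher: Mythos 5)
Your part (a) and the implication $(2)\Rightarrow(3)$ are correct and agree with the paper (part (a) is just Carleson's corona theorem plus the trivial converse, and the paper cites it without proof). The genuine gap is in your proof of $(1)\Rightarrow(2)$. You set $r=\prod_j r_j$, $\tilde g_j=(\prod_{i\neq j}r_i)g_j$ (note in passing that this should be $\tilde g_j=(\prod_{i\neq j}r_i)^{-1}g_j$ for $h_j^\pm=r\tilde g_j$ to hold), and you justify $[\tilde g_1,\dots,\tilde g_n]\in HCT_n^\pm$ by asserting that each $\prod_{i\neq j}r_i$ is bounded and bounded away from zero on $\C^\pm$. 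That is false: membership in $\mathcal{GR}$ only forbids zeros and poles on $\dot\R$, not in the open half-planes. For instance $r(\xi)=(\xi-i)/(\xi+i)$ lies in $\mathcal{GR}$ but vanishes at $i\in\C^+$, and $r^{-1}$ has a pole there; taking $n=2$, $r_1=r$, $r_2=r^{-1}$, your $\tilde g_1$ need not even lie in $H_\infty^+$, and $\tilde g_2$ vanishes at $i$, so the corona infimum can be destroyed. This interior zero/pole bookkeeping is exactly what the paper's proof of $(1)\Rightarrow(2)$ does: it collects the finitely many zeros and poles $z_1,\dots,z_N$ of the $r_j$ inside $\C^+$, sets $\ell_j$ equal to the minimum over $i$ of the order of $h_i$ at $z_j$, and divides all the $h_i$ by the single function $s(z)=(z+i)^{-\sum_j\ell_j}\prod_j(z-z_j)^{\ell_j}$; the minimality of $\ell_j$ guarantees that the quotients are analytic, bounded, and have no common zero near any $z_j$, while away from these points $s$ is bounded and bounded away from zero. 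Your argument skips precisely this step, and it is the heart of the implication.

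Your $(3)\Rightarrow(1)$ is also only a strategy, not a proof: you explicitly leave open the construction of the rational factor and the behaviour at $\infty$ ("the fiddly part"). Moreover you aim for more than is needed: since the definition of $MCT_n^\pm$ allows a \emph{different} $r_j$ in each slot, one does not have to produce a single common factor to reach $(1)$. The paper instead writes $h_j^\pm=r_j\phi_j$ and $g_j=s_j\psi_j$ using $M_\infty^\pm=\{s\phi\colon s\in\mathcal{GR},\ \phi\in H_\infty^\pm\}$, arranges (by shifting rational factors from $\phi_j$ to $r_j$) that the $\phi_j$ have no common zero on an open set $\Omega$ containing all poles of the $r_js_j$ in the half-plane, and then deduces the corona bound for $[\phi_1,\dots,\phi_n]$ on $\C^\pm\setminus\Omega$ from the Bezout identity $\sum_j r_js_j\phi_j\psi_j=1$ together with the boundedness of the $r_js_j$ there. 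This gives $(1)$ directly; the single-factor statement $(2)$ then follows from $(1)$ by the argument with the minimal orders $\ell_j$ described above.
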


Part (a) is the classical corona theorem, going back to Carleson \cite{Carl}. 
When proving (b), the case $p=\infty$ of the following  simple observation is needed:
\eq{aug81}  M^\pm_p= \{s\phi \colon s\in {\mathcal
{GR}}, \ \phi\in H^\pm_p \}. \en For a proof see \cite[Proposition~2.3]{CDR}.

{\sl Proof of Part} (b). We follow here the logic of \cite[Theorem 2.6]{CDR}, where the case $n=2$ was considered.

(1) implies (2):
Let (1) hold, that is, $h_j=s_j\phi_j$, where
$s_j\in \mathcal {GR}$ and $\{ \phi_1,\ldots,\phi_n\}$ is a corona
$n$-tuple in $H^+_\infty$ (the case of $H_\infty^-$ can be treated
along the same lines). Denoting by $\{z_1,\ldots,z_N\}$  the set of all
zeros and poles of $s_1,\ldots,s_n$ in $\C^+$ and by $U_\epsilon$ its
$\epsilon$-neighborhood, observe that $h_1,\ldots,h_n$ are analytic,
bounded, and satisfy the corona condition on $\C^+\setminus
U_\epsilon$ for every $\epsilon>0$.  On the other hand, each of the
functions $h_i$ has either a zero or a pole at $z_j$,
$i=1,\ldots,n;j=1,\ldots,N$. Let $\ell_j$ be the minimum of the orders of
all $h_i$ at the given $z_j$ (recall that the order of $\phi$ at $z_0$ is
$k$ (respectively, $-k$) if $z_0$ is a zero (respectively, pole) of $\phi$
with multiplicity $k$.) Introduce \[ s(z)=(z+i)^{-\sum_{j=1}^N\ell_j}\prod_{j=1}^N
(z-z_j)^{\ell_j}.\] Then the functions
$g_i=s^{-1}h_i$ are analytic, bounded and satisfy the corona condition
on $\C^+\setminus U_\epsilon$ simultaneously with $h_i$, because
$s$ is analytic, bounded and bounded away from zero on this set. Due
to the choice of $\ell_j$, we also have that all functions $g_i$ are
analytic on $U_\epsilon$ and for each $j$ at least one of them
assumes a non-zero value at $z_j$. Consequently, $[g_1,\ldots,g_n\in
HCT_n^+$.

(2) implies (3): Let $h_j^\pm=\chi g_j$, $j=1,2,\ldots, n$, for some
$\chi\in \mathcal  {GR}$ and $[g_1,\ldots, g_n]\in HCT_n^\pm$. Using part (a), we
have $\sum_{j=1}^n \ell_jg_j = 1$ for some $\ell_j\in
H^\pm_\infty$. Now $\sum_{j=1}^n (\chi^{-1}\ell_j)h_j^\pm = 1$,
where $\chi^{-1}\ell_j\in M^\pm_{\infty}$ by (\ref{aug81}), and (3)
holds.

(3) implies (1): We have $\sum_{j=1}^n g_jh_j^\pm = 1$ for some
$g_j\in M_\infty^\pm$. By (\ref{aug81}), $h_j^+=r_j\phi_j$ and
$g_j=s_j\psi_j$ for some $r_j, s_j\in \mathcal {GR}$ and
$\phi_j,\psi_j\in H_\infty^\pm$. Consequently, \eq{cc}
\sum_{j=1}^nr_js_j\phi_j\psi_j = 1 \text{ on } \C^\pm.\en Without
loss of generality we may suppose that the functions $\phi_j$ do not
all vanish simultaneously  at any point in some open set
$\Omega\subseteq \C^\pm$ containing all the poles of $r_js_j$,
$j=1,\ldots,n$, in the upper half plane,  since otherwise a respective
rational factor could be moved from $\phi_j$ to $r_j$. Then the
$n$-tuple $[\phi_1,\ldots,\phi_n]$ satisfies the corona condition on
$\Omega$.

Since $r_js_j$ ($j=1,2,\ldots, n$)are bounded on $\C^\pm\setminus\Omega$, the
corona condition for  $(\phi_1,\ldots,\phi_n)$ follows from \eqref{cc}.
Thus, $[\phi_1,\ldots,\phi_n]\in HCT_n^\pm$.  \qed
\bigskip

Note that (2) implies (1) in a trivial way.
Corollary~\ref{aug86} admits therefore the following interpretation.

\begin{thm}\label{aug93}
${\rm (a)}$ Let $\Phi\in (H_{\infty}^\pm)^{n\times (n-1)}$. Then
$\Phi$ is left invertible over $H_{\infty}^\pm$ if and only if
$\left[\Delta_{1,.}(\Phi),\ldots, \Delta_{n,.}(\Phi)\right]\in HCT^\pm_n$.

${\rm (b)}$  Let $\Phi\in (M_{\infty}^\pm)^{n\times (n-1)}$. Then
$\Phi$ is left invertible over $M_{\infty}^\pm$ if and only if
$\left[\Delta_{1,.}(\Phi),\ldots, \Delta_{n,.}(\Phi)\right]\in MCT^\pm_n. $

In both cases formula {\em\eqref{I.3}} applies, provided $\Phi$ is left invertible over the
respective algebra.
\end{thm}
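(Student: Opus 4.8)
The plan is to deduce this statement directly from Corollary~\ref{aug86}, specialized to $\mathcal A=H^\pm_\infty$ and $\mathcal A=M^\pm_\infty$, combined with the corona-type characterizations in Theorem~\ref{aug82}. First I would note that both $H^\pm_\infty$ and $M^\pm_\infty$ are unital commutative algebras, so Corollary~\ref{aug86} is applicable with $\mathcal A$ equal to either of them. For $\Phi\in(H^\pm_\infty)^{n\times(n-1)}$ (respectively $(M^\pm_\infty)^{n\times(n-1)}$), each entry $\Delta_{j;\cdot}(\Phi)$ of the column \eqref{aug83} is a determinant of an $(n-1)\times(n-1)$ submatrix of $\Phi$, i.e.\ a polynomial in the entries of $\Phi$, and therefore again lies in $H^\pm_\infty$ (respectively $M^\pm_\infty$) since these are rings. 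Thus \eqref{aug83} is a genuine element of $(H^\pm_\infty)^n$ (respectively $(M^\pm_\infty)^n$).

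Next, Corollary~\ref{aug86} says that $\Phi$ is left invertible over the algebra in question precisely when the column \eqref{aug83} is left invertible over the same algebra. For $H^\pm_\infty$, part (a) of Theorem~\ref{aug82} identifies the latter condition with $[\Delta_{1;\cdot}(\Phi),\ldots,\Delta_{n;\cdot}(\Phi)]\in HCT^\pm_n$; for $M^\pm_\infty$, the equivalence of statements (1) and (3) in part (b) of Theorem~\ref{aug82} identifies it with membership of the same tuple in $MCT^\pm_n$. Combining these gives the equivalences asserted in (a) and (b).

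For the final claim about formula \eqref{I.3}, I would simply observe that when $\Phi$ is left invertible over $H^\pm_\infty$ (respectively $M^\pm_\infty$), Corollary~\ref{aug86} already furnishes an explicit left inverse through \eqref{I.3}--\eqref{I.6}: the minors $\Delta_{p,s;j}(\Phi)$ appearing there belong to the algebra for the same reason as above, the diagonal matrix $\widetilde I_n$ in \eqref{I.6} has integer entries, and the row $\Delta^*$ in \eqref{I.5} is exactly a left inverse of the corona tuple \eqref{aug83}, which exists over $H^\pm_\infty$ (respectively $M^\pm_\infty$) by the corresponding part of Theorem~\ref{aug82}. Hence the matrix $\Psi$ built in \eqref{I.3} has entries in the respective algebra and satisfies $\Psi\Phi=I_{n-1}$. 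I do not expect a genuine obstacle here: the only points needing a word of care are that all the determinantal quantities remain inside the algebra --- immediate, since a ring is closed under its operations --- and, in the $M^\pm_\infty$ case, that the corona solutions are genuinely in $M^\pm_\infty$ and not in some larger space; but this is precisely what the equivalence $(1)\Leftrightarrow(3)$ of Theorem~\ref{aug82}(b) guarantees. So the theorem is essentially a bookkeeping combination of Corollary~\ref{aug86} with Theorem~\ref{aug82}.
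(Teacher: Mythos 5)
Your argument is correct and is exactly the paper's route: the authors present Theorem~\ref{aug93} as an immediate ``interpretation'' of Corollary~\ref{aug86} for $\mathcal A=H^\pm_\infty$ and $\mathcal A=M^\pm_\infty$, with Theorem~\ref{aug82} translating left invertibility of the column \eqref{aug83} into the corona-tuple conditions. Your write-up just makes explicit the routine verifications (minors stay in the algebra, formula \eqref{I.3} carries over) that the paper leaves unsaid.
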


According to \cite[Theorem 2.7]{CDR}, $\Phi\in (M_{\infty}^+)^{2\times 1}$ is left invertible in $M_\infty^+$ if and only if there exists a matrix function $R\in \mathcal{G}\mathcal{R}^{2\times 2}$ and $f_+\in HCT^+_1$ such that $\Phi=Rf_+$. We here extend this result to include $\Phi\in (M_{\infty}^\pm)^{n\times (n-1)}$ with arbitrary $n\in\mathbb N$.

\begin{thm}\label{aug910}
Let $\Phi\in  (M_{\infty}^\pm)^{n\times (n-1)}$. Then $\Phi$ is left
invertible over $M_{\infty}^\pm$ if and only if there exist $R\in \mathcal{GR}^{n\times n}$,
$Q\in \mathcal{GR}^{(n-1)\times(n-1)}$ and $F\in (H^\pm_\infty)^{n\times(n-1)}$, the latter being left invertible over
$H^\pm_\infty$, such that \eq{4.19} \Phi=RFQ. \en
\end{thm}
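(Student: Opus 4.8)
The plan is to prove the two implications separately. The implication ``$\Leftarrow$'' is immediate: if $\Phi=RFQ$ and $G\in (H^\pm_\infty)^{(n-1)\times n}$ satisfies $GF=I_{n-1}$, then $Q^{-1}GR^{-1}$ is a left inverse of $\Phi$ with entries in $M^\pm_\infty$, because $\mathcal R$ and $H^\pm_\infty$ both lie in the algebra $M^\pm_\infty$. For ``$\Rightarrow$'' I would first reduce to the case $\Phi\in (H^\pm_\infty)^{n\times(n-1)}$. Since $M^\pm_\infty=H^\pm_\infty+\mathcal R$, every entry of $\Phi$ has only finitely many poles in $\C^\pm$; for each row $i$ pick a finite Blaschke product $\lambda_i$ for the half-plane $\C^\pm$ whose zeros, with multiplicities, dominate all the $\C^\pm$-poles occurring in that row. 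Then $\Phi_0:=\diag[\lambda_1,\dots,\lambda_n]\,\Phi\in (H^\pm_\infty)^{n\times(n-1)}$, $\Phi=R_0\Phi_0$ with $R_0:=\diag[\lambda_1^{-1},\dots,\lambda_n^{-1}]\in\mathcal{GR}^{n\times n}$, and $\Phi_0$ remains left invertible over $M^\pm_\infty$. So it suffices to factor $\Phi_0$.

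Next I would analyze the maximal minors. By Theorem~\ref{aug93} the column $[\Delta_{1;\cdot}(\Phi_0),\dots,\Delta_{n;\cdot}(\Phi_0)]^T$ lies in $MCT^\pm_n$, so by Theorem~\ref{aug82} one can write $\Delta_{j;\cdot}(\Phi_0)=r\,g_j$ with $r\in\mathcal{GR}$ and $[g_1,\dots,g_n]\in HCT^\pm_n$. Since each $\Delta_{j;\cdot}(\Phi_0)\in H^\pm_\infty$ while the $g_j$ have no common zero in $\C^\pm$, the factor $r$ has no pole in $\C^\pm$; splitting off its $\C^\pm$-zero-free part (a unit of $H^\pm_\infty$) and absorbing it into the $g_j$, we may take $r=b$ to be a finite Blaschke product for $\C^\pm$. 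Thus $\Delta_{j;\cdot}(\Phi_0)=b\,g_j$ with $[g_j]\in HCT^\pm_n$.

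The heart of the proof is then to peel off $b$, by induction on $\deg b$. I claim that any $\Phi_0\in(H^\pm_\infty)^{n\times(n-1)}$ whose maximal minors have the form $b\,g_j$ ($b$ a finite Blaschke product, $[g_j]\in HCT^\pm_n$) factors as $\Phi_0=R_1FQ_1$ with $R_1\in\mathcal{GR}^{n\times n}$, $Q_1\in\mathcal{GR}^{(n-1)\times(n-1)}$, and $F$ left invertible over $H^\pm_\infty$. If $\deg b=0$, then $[\Delta_{j;\cdot}(\Phi_0)]^T\in HCT^\pm_n$, so $\Phi_0$ is left invertible over $H^\pm_\infty$ by Theorem~\ref{aug93}, and we take $R_1=I_n$, $Q_1=I_{n-1}$, $F=\Phi_0$. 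If $\deg b\ge 1$, pick a zero $v\in\C^\pm$ of $b$. All $n$ maximal minors of $\Phi_0$ vanish at $v$, so $\operatorname{rank}\Phi_0(v)\le n-2$ and there is a nonzero $d\in\C^{n-1}$ with $\Phi_0(v)d=0$. Completing $d$ to an invertible constant matrix $D$, the last column of $\Phi_0D$ equals $\Phi_0d$ and vanishes at $v$; dividing it by the $\C^\pm$-Blaschke factor with zero at $v$ produces $\Phi_1\in(H^\pm_\infty)^{n\times(n-1)}$ with $\Phi_1=\Phi_0D'$ for a suitable $D'\in\mathcal{GR}^{(n-1)\times(n-1)}$. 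The maximal minors of $\Phi_1$ are those of $\Phi_0$ times the scalar function $\det D'$, hence of the form $\tilde b\,\tilde g_j$ with $\tilde b$ a finite Blaschke product of degree $\deg b-1$ and $[\tilde g_j]\in HCT^\pm_n$. Applying the induction hypothesis to $\Phi_1$ gives $\Phi_1=R_1FQ_1$, whence $\Phi_0=R_1F\bigl(Q_1(D')^{-1}\bigr)$ and finally $\Phi=R_0R_1\,F\,\bigl(Q_1(D')^{-1}\bigr)$, which is of the required form.

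The step I expect to be the main obstacle is the inductive one: one must verify carefully that, thanks to the rank drop $\operatorname{rank}\Phi_0(v)\le n-2$ (equivalently, the existence of the nullvector $d$), the designated column of $\Phi_0D$ really is divisible by the Blaschke factor within $H^\pm_\infty$, and that the Blaschke degree strictly decreases at each step so that the recursion terminates. A subsidiary point, used in the minor analysis, is the elementary observation that an element of $\mathcal{GR}$ which remains in $H^\pm_\infty$ after being multiplied by each member of a $\C^\pm$ corona tuple cannot have a pole in $\C^\pm$, so that what is left to remove is precisely a finite Blaschke product.
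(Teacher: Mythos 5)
Your argument is correct, but it follows a genuinely different route from the paper's. The paper works column by column: each column of $\Phi$ is an $MCT^+_n$ tuple, so a scalar $\widetilde r_j\in\mathcal{GR}$ can be pulled out of it, giving $\Phi=\widetilde G^+Q$ with $Q=\diag[\widetilde r_1,\ldots,\widetilde r_{n-1}]$ and $\widetilde G^+\in(H^+_\infty)^{n\times(n-1)}$; it then borders $\widetilde G^+$ with one extra column manufactured from an $H^+_\infty$ left inverse of the minor tuple, so that the resulting square matrix has determinant in $H^+_\infty\cap\mathcal{GR}$, and invokes the known factorization of such square matrices as (rational invertible) times (invertible over $H^+_\infty$) from \cite{CaLeS} (or Lemma~2.1 of \cite{LS}); $F$ is then read off as the relevant columns of the $H^+_\infty$-invertible factor. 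You instead clear poles row by row (so your $R$ is diagonal and, in fact, your induction never touches it again), reduce the obstruction to a single finite Blaschke product $b$ dividing all maximal minors, and strip off $b$ one zero at a time using the rank drop $\operatorname{rank}\Phi_0(v)\le n-2$ --- a Blaschke--Potapov-type elementary factor extraction performed on the right. In effect you re-prove, in the rectangular setting and by elementary means, the special case of the square-matrix lemma that the paper uses as a black box; what you gain is a self-contained constructive proof, what you lose is brevity and the diagonal form of $Q$. The two steps you flag as delicate do go through: divisibility holds because every entry of $\Phi_0 d$ is an $H^+_\infty$ function vanishing at $v\in\C^+$, hence divisible in $H^+_\infty$ by $(z-v)/(z-\bar v)$, and the Cauchy--Binet identity $\Delta_{j;\cdot}(\Phi_0 D')=\Delta_{j;\cdot}(\Phi_0)\det D'$ shows the Blaschke degree drops by exactly one per step. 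Your preliminary normalization of $r\in\mathcal{GR}$ to a finite Blaschke product is also sound, since a pole of $r$ in $\C^+$ would force all $g_j$ to vanish there, violating the corona condition, and the remaining rational factor, being zero- and pole-free on $\overline{\C^+}\cup\{\infty\}$, is a unit of $H^+_\infty$ absorbable into the corona tuple.
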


\begin{proof} The sufficiency is obvious. When proving necessity, let us consider the case of invertibility over $M^+_\infty$; the case of $M^-_\infty$ can of course be treated in a similar way.

So, let $\Phi$ be left invertible over $M^+_\infty$. Denote by $\Phi_j$ the $j$-th column of $\Phi$: \[ \Phi=[\Phi_1 \Phi_2\ldots \Phi_{n-1}],
\quad \Phi_j\in (M^+_\infty)^{n\times 1}. \] Then $\Phi_j\in MCT^+_n$ and, by Proposition~\ref{aug82}(c), there exist $\widetilde{r}_j\in \mathcal{GR}$ and
$\widetilde{g}^+_j\in HCT^+_n$ such that $\Phi_j=\widetilde{r}_j\widetilde{g}^+_j$. Thus, \eq{4.20} \Phi=\widetilde{G}^+Q, \en
\[ Q =\diag [ \widetilde{r}_1,\ldots,  \widetilde{r}_{n-1}],\]  \eq{4.22} \widetilde{G}^+=[\widetilde{g}^+_1\ \widetilde{g}^+_2\ \ldots\ \widetilde{g}^+_{n-1}]\in (H^+_\infty)^{n\times (n-1)} \en and \[ \Delta_k(\widetilde{G}^+)=\left(\prod_{j=1}^{n-1}r_j^{-1}\right)\Delta_{\cdot;k}(\Phi), \quad k=1,2,\ldots,n.\] On the other hand, by Theorem~\ref{aug86} (or \ref{aug93}(c))
\[ \Delta(\Phi):=[\Delta_{\cdot;1}(\Phi), \Delta_{\cdot;2}(\Phi),\ldots,
\Delta_{\cdot;n}(\Phi)]\in MCT^+_n \] so that, by Proposition~\ref{aug82}(c), there exist $r\in \mathcal{GR}$ and $g^+\in HCT^+_n$ such that \[ \Delta(\Phi)=rg^+.\] Therefore, if $g_l^+\in (H_\infty^+)^{1\times n}$ is the left inverse of $g^+$ over $H_\infty^+$, using the notation \[ \Delta(\widetilde{G}^+):=[\Delta_{\cdot;1}(\widetilde{G}^+), \Delta_{\cdot;2}(\widetilde{G}^+),\ldots, \Delta_{\cdot;n}(\widetilde{G}^+)], \] we have \eq{4.27} g_l^+\cdot \Delta(\widetilde{G}^+)=\left(\sum_{j=1}^{n-1}r_j^{-1}\right)rg_l^+g^+=  \left(\sum_{j=1}^{n-1}r_j^{-1}\right)r\in H_\infty^+\cap\mathcal{GR}.\en Let now, in the notation of \eqref{4.22},
\[ g_l^+= [(g_l^+)_1, (g_l^+)_2, \ldots, (g_l^+)_n]^T\in (H_\infty^+)^{n\times 1},\quad  h_k^+:=(-1)^{k+1}(g_l^+)_k, \quad k=1,2,\ldots, n,\]  and \[ h^+=[h_1^+, h_2^+, \ldots h_n^+]^T\in (H_\infty^+)^{1\times n}. \] Let moreover \eq{4.31} \widetilde{M}^+=[h^+ \widetilde{g}_1^+\ \widetilde{g}_2^+\ \ldots \ \widetilde{g}_{n-1}^+] = [ h^+\, | \, \widetilde{G}^+]. \en Then $\widetilde{M}^+\in (H_\infty^+)^{n\times n}$ and \[ \det \widetilde{M}^+=g_l^+\Delta(\widetilde{G}_+)\in H_\infty^+\cap\mathcal{GR}\] by \eqref{4.27}. Following Theorem~3.4 in \cite{CaLeS} or Lemma~2.1 in \cite{LS}, we see that there exist $R\in \mathcal{GR}^{n\times n}$ and $M^+\in\mathcal{G}(H_\infty^+)^{n\times n}$ such that $R^{-1}\widetilde{M}^+= M^+$.  From here and \eqref{4.31}, \eq{4.34} R^{-1}\widetilde{G}^+=G^+, \en where $G^+$ is left invertible over $H_\infty^+$, its left inverse equals $(M^+)^{-1}$  with the first row deleted.

It follows from \eqref{4.20} and \eqref{4.34} that \eqref{4.19} holds, with $F=G^+$. \end{proof}
\bigskip

\begin{rem} {\rm The obvious analogues of Theorems~\ref{aug93} and \ref{aug910} for right invertible matrices over ${\mathcal
A}$ are also valid. We will not explicitly state these analogues, but use them as
needed in the sequel.}
\end{rem}

\section{Fredholmness of Toeplitz operators and factorization}\label{s:Fred}

Let $L_p(\R)$, $1<p \leq \infty$, be the standard Lebesgue spaces of functions on the real
line $\R$ with respect to the Lebesgue measure, while $H_p^\pm$ denote the Hardy spaces
$H^p(\C^\pm)$ in the open  upper (resp. lower) halfplane $\C^+$ (resp. $\C^-$). For $1<p<\infty$, $H_p^\pm$
consists of all functions $f$ holomorphic in $\C^\pm$ for which
\[ \sup_{\pm y>0} \int_{-\infty}^\infty |f(x+iy)|^p dx <\infty . \] This definition is standard in many sources, see, e.g.,
\cite{Dur,GKru921,Koosis98,Wid60} for basics on $H_p^\pm$ and associated singular integral operators.

For $p\in ]1,\infty [$, the space $L_p(\R)$ splits into the direct sum of $H_p^+$ and $H_p^-$: $L_p(\R)=H_p^+\dot{+} H_p^-$.
We denote by $P^\pm$ the projection of $L_p(\R)$ onto $H^{\pm}_p$ parallel to $H^\mp_p$. We will also need the  modified projections $\widetilde{P}^\pm$, acting from $L_\infty(\R)$  into ${\mathcal  L}_p^\pm:=(\xi \pm i)H_p^\pm$,
$p\in ]1,\infty [$, by
\eq{II.1} \widetilde{P}^\pm \phi=(\xi+i)P^\pm \left(\frac{\phi}{\xi+i}\right).
\en

Toeplitz operators with matrix \emph{symbol} $G\in (L_\infty(\R))^{n\times n}$ are defined as follows:
\eq{2.11}
    T_G:(H_p^+)^n\longrightarrow (H_p^+)^n, \hspace{0.4cm}T_G\phi^+=P^+G\phi^+
    \hspace{0.4cm}(p\in ]1,+\infty[).
\en
There is a close relation between properties of Toeplitz operators and factorization of their symbols. Thus, we remind now the basic definitions and properties concerning the latter.

Given $p\in (1,\infty)$, an $L_p$-{\em factorization} of a function
 $G\in (L_\infty(\R))^{n\times n}$ is defined as a representation
\begin{equation}\label{2.5}
    G=G_-DG_+,
\end{equation}
where $D$ is a diagonal rational matrix of the form
\begin{equation}\label{2.6}
    D={\rm diag}\, (r^{k_j})_{j=1,2,\dots,n}\,, \;\; k_j\in\Z \;\;
    \text{for all} \;j=1,2,\dots n,
\end{equation}
\begin{equation}\label{2.4}
    r(\xi)=\frac{\xi-i}{\xi+i}\;, \hspace{0.2cm}\text{for} \hspace{0.2cm}
    \xi\in\R,
\end{equation}
and the factors $G_\pm$ are such that, for
\begin{equation}\label{A2.4}
    p'=\frac{p}{p-1}\,,\;\; \lambda_\pm(\xi)=\xi\pm i\;
    (\xi\in\R),
\end{equation}
we have
\eq{2.7}
\lambda_+^{-1}G_+^{-1}\in (H_p^+)^{n\times n}, \quad \lambda_+^{-1}G_+\in (H_{p'}^+)^{n\times n}
\en
\eq{2.8}
    \lambda_-^{-1}G_-\in (H_p^-)^{n\times n}\;, \hspace{0.3cm} \lambda_-^{-1}G_-^{-1}\in
    (H_{p'}^-)^{n\times n}. 
\en Under conditions \eqref{2.7}, \eqref{2.8},  $G_-P^+G_-^{-1}I$
can be considered as a closable operator on $(L_p(\R))^n$ defined on a dense linear set $\lambda_{+}^{-1}G_+{\mathcal R}^n$. If, in addition,
\eq{2.9} G_-P^+G_-^{-1}I \text{ is bounded in the metric of }  (L_p(\R))^n \en
(and therefore extends onto $(L_p(\R))^n$ by continuity), we say that \eqref{2.5} is a {\em Wiener-Hopf} ({\em WH}) $p$-factorization of $G$.

For each $p$, the diagonal middle factor in (\ref{2.5}) is unique up to the order of its
diagonal elements, and the integers $k_j$
are called the \emph{partial indices} of $G$, its sum  ${\rm Ind}_p\,(G)$ being the
(total) $p$-{\em index} of $G$. In the case of a scalar symbol possessing a {\em WH} $p$-factorization,
the partial and the total indices coincide and will be simply called  the $p$-\emph{index} of $G$.

The factorization (\ref{2.5}) is said to be {\em bounded} if
\begin{equation}\label{A2.12}
    G_+\in \mathcal{G}(H_\infty^+)^{n\times n}, \quad     G_-\in \mathcal{G}(H_\infty^-)^{n\times n}.
\end{equation}
Clearly, a bounded factorization is a {\em WH} $p$-factorization\, for all
$p\in ]1,+\infty[$.

Any matrix function in $\mathcal {GR}^{n\times n}$ admits a
factorization \eqref{2.5} with $G_\pm\in \mathcal {G}(\mathcal{R}^\pm)^{n\times n}$, where ${\mathcal R}^\pm:= {\mathcal R}\cap H_\infty^\pm$ is the subalgebra of $\mathcal R$ consisting of all rational functions without poles in $\C^\pm\cup\{\infty\}$.

In particular, every scalar function in $\mathcal {GR}$ is the product of
functions in $\mathcal {GR}^+$, $\mathcal {GR}^-$, and some integer
power of the function $r$ defined by \eqref{2.4}. Thus, without loss of
generality condition $s\in\mathcal {GR}$ in \eqref{aug81} may be
substituted by $s=s_\mp r^j$, where $s_\mp\in\mathcal {GR}^\mp$
and $j\in\Z$.

The relation between Fredholm properties of $T_G$ and factorization \eqref{2.5} is well known; see e.g. \cite[Theorem 5.2]{MP86}.
For convenience of reference, we give the precise statement here (as it was done also in \cite{CDR}).
\begin{thm} \label{feb31}
Let $G\in (L_\infty(\R))^{n\times n}$, $p\in ]1,+\infty[$.  Then $T_G$ is
Fredholm on $(H^+_p)^n$ if and only if $G$ admits a WH
$p$-factorization.
\end{thm}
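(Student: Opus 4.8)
The plan is to prove the two implications following the classical theory of Wiener--Hopf factorization of measurable matrix symbols; the argument and all technical details can be found in \cite{MP86} (see also \cite{LS}), so here I only outline its structure. A convenient device throughout is to pass from $T_G$ on $(H_p^+)^n$ to the associated \emph{paired} (Riemann) operator $A_G:=GP^++P^-$ on $(L_p(\R))^n$: writing $L_p(\R)^n=(H_p^+)^n\dot{+}(H_p^-)^n$ one sees that $A_G$ is block triangular with $T_G$ in one diagonal block and the identity in the other, so $A_G$ is Fredholm on $(L_p(\R))^n$ if and only if $T_G$ is Fredholm on $(H_p^+)^n$, and then the indices coincide.

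\emph{Sufficiency.} Suppose $G=G_-DG_+$ is a WH $p$-factorization. The decisive fact --- and the only place where the defining condition \eqref{2.9} enters --- is that, after the normalisations built into \eqref{2.7}--\eqref{2.8}, the maps $\phi\mapsto G_-P^+G_-^{-1}\phi$ and the analogous $\lambda_\pm$-normalised map attached to $G_+$ extend to bounded operators on $(L_p(\R))^n$ whose natural ``inverses'' (obtained by replacing $G_\pm$ with $G_\pm^{-1}$) are likewise bounded. Using these one obtains a factorization $A_G=A_-A_DA_+$ with $A_\pm$ boundedly invertible on $(L_p(\R))^n$, which reduces the Fredholm question for $A_G$ to that for $A_D$, i.e.\ to the diagonal scalar Toeplitz operators $T_{r^{k_j}}$. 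Since $r=(\xi-i)/(\xi+i)$ is inner in $H_\infty^+$, each $T_{r^k}$ is Fredholm on $H_p^+$ (injective with $\dim\coker=k$ when $k>0$, surjective with $\dim\ker=-k$ when $k<0$, the identity when $k=0$); hence $A_D$, and therefore $T_G$, is Fredholm, with $\dim\ker T_G=\sum_{k_j<0}(-k_j)$ and $\dim\coker T_G=\sum_{k_j>0}k_j$.

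\emph{Necessity.} Assume $T_G$, equivalently $A_G$, is Fredholm. The first step is to show that $G\in\mathcal{G}(L_\infty(\R))^{n\times n}$, i.e.\ that $|\det G|$ is bounded away from zero on $\R$; if this failed one could, using sets of positive measure on which $\det G$ is arbitrarily small, build an infinite-dimensional subspace on which $A_G$ is ``almost annihilating'', contradicting that $\operatorname{Im}A_G$ is closed and of finite codimension. Once $G^{-1}\in(L_\infty(\R))^{n\times n}$, both $T_G$ and $T_{G^{-1}}$ are bounded, $\ker T_G=\{\phi^+\in(H_p^+)^n:G\phi^+\in(H_p^-)^n\}$ is the finite-dimensional solution space of the homogeneous Riemann problem for $G$ in $L_p$, and $\coker T_G$ is governed by the transposed problem in $L_{p'}$. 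The second step is the construction proper: using bases of the Riemann-problem solution spaces for $G$ (and for $G^{-1}$) in the appropriate $L_q$-classes, exploiting their module structure over polynomials together with the finiteness forced by Fredholmness, one assembles the plus factor $G_+$, reads off the exponents $k_j$ of $D$ from the resulting behaviour at infinity, and puts $G_-:=GG_+^{-1}D^{-1}$, finally checking that $G_\pm$ satisfy \eqref{2.7}--\eqref{2.8} and that \eqref{2.9} holds.

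The main obstacle is the necessity direction, and within it the two genuinely nontrivial points are: (a) deducing $\mathrm{ess\,inf}_{\R}|\det G|>0$ from Fredholmness of $T_G$ (a Simonenko-type local argument), and (b) verifying that the factors produced by the construction land in exactly the prescribed Hardy classes and meet the boundedness requirement \eqref{2.9}. Both are standard in the factorization literature, and for the purposes of this paper we simply invoke \cite{MP86}.
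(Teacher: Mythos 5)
The paper offers no proof of this theorem at all: it records the statement purely for convenience of reference and cites \cite[Theorem 5.2]{MP86} (and \cite{CDR}), exactly as you ultimately do. Your sketch — reduction to the paired operator $GP^++P^-$, sufficiency via the factorization and the diagonal symbols $r^{k_j}$, necessity via $\mathrm{ess\,inf}|\det G|>0$ and the construction of the factors from the Riemann-problem solution spaces — is a faithful outline of that classical argument, so your approach matches the paper's.
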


The partial indices are related to the dimension of the kernel and the
cokernel of $T_G$ by
\begin{equation}\label{A2.10}
    \dim\ker T_G=\sum_{k_j\leq 0}|k_j|\,, \hspace{0.5cm}
    \dim\coker T_G=\sum_{k_j\geq 0} k_j.
\end{equation}
Thus, the index of $T_G$, ${\rm Ind}\, T_G$, is given by (see Theorem \ref{feb31})
$$ {\rm Ind}\, T_G:=
 {\rm dim}\, ({\rm Ker}\, T_G) - {\rm dim}\, (\coker T_G)
=-{\rm Ind}_p \, G. $$ We see thus that the existence of a
\emph{canonical p-factorization\,} for $G$ is particularly interesting,
since it is equivalent to invertibility for $T_G$. Moreover, the inverse
operator can then be defined in terms of $G_\pm$ by
\begin{equation}\label{A2.11}
    T_G^{-1}=G_+^{-1}P^+G_-^{-1}I.
\end{equation}

\section{One sided invertibility and Fredholmness of Toeplitz operators}\label{s:osift}
 In this section we show that one-sided invertibility over the algebras $H^\pm_\infty+C$ or $H^\pm_\infty$ of certain submatrices of the $n\times n$ matrix function $G$ implies that the Toeplitz operators $T_G$ and $T_{\det G}$ are at least nearly, and in some cases strictly, Fredholm equivalent. In particular, in the latter case $T_G$ possesses Coburn's property.

Given $\Phi^{\pm}\in (H^\pm_\infty+C)^{n \times (n-1)}$, $\Psi^{\pm}\in
(H^\pm_\infty+C)^{(n-1) \times n}$ such that $\Psi^{\pm}\Phi^{\pm}=I_{n-1}$,
let moreover $\Phi^\pm_e, \Psi^\pm_e$ be defined by
\begin{equation}\label{II.2}
\Phi^\pm_e=\left[ \Phi^\pm \ \ \ N^\pm\right], \qquad \Psi^\pm_e=\left[\begin{array}{c} \Psi^{\pm} \\ \widetilde{N}^\pm
\end{array}\right],
\end{equation}
where
\begin{eqnarray} \label{II.3'}
N^\pm&=&\left[\begin{array}{c} \Delta_{\cdot,1} (\Psi^\pm) \\ -\Delta_{\cdot, 2} (\Psi^\pm)\\ \vdots
\\ (-1)^{n-1} \Delta_{\cdot, n} (\Psi^\pm) \end{array} \right]\in (H_\infty^\pm+C)^{n\times 1}, \\ \label{II.3}
\widetilde{N}^\pm&=& \left[ \Delta_{1,\cdot} (\Phi^\pm),  -\Delta_{2, \cdot} (\Phi^\pm), \ldots ,
(-1)^{n-1} \Delta_{n, \cdot} (\Phi^\pm) \right]\in (H^\pm_\infty+C)^{1\times m}.
\end{eqnarray}

\begin{thm}\label{aug112}
Let $G\in (L_\infty (\R))^{n \times n}$, and let $\Psi$ be an
$(n-1)\times n$ submatrix of $G$ obtained by omitting one row in $G$.

${\rm (a)}$ If $\Psi\in (H_\infty^++C)^{(n-1)\times n}$, and if $\Psi$ is right invertible over $H_\infty^++C$, then
$T_G$ is nearly Fredholm equivalent to $T_{{\rm det}\, G}$, for every fixed $p\in ]1,\infty[$.

${\rm (b)}$ If moreover  $\Psi\in (H_\infty^+)^{(n-1)\times n}$, and if
 $\Psi$ is right invertible over $H_\infty^+$,
then, for any fixed $p\in  ]1,\infty[$, $\ker T_G=\{0\}$ or $\ker T_G^*=\{0\}$, and $T_G$ is strictly Fredholm equivalent to $T_{\det G}$. In particular, $T_G$ is one- or two- sided invertible simultaneously with $T_{\det G}$.

 ${\rm (c)}$ If, in the setting of ${\rm (b)}$, in addition $\Ind T_{\det G}\geq
0$ and the omitted row $\widehat{G}_n$ of $G$ is its last one, then
 a WH
$p$-factorization of $G$ is given by {\em\eqref{2.5}} with
\begin{equation}\label{II.5}
G_-=\left[\begin{array}{cc} I_{n-1} & 0 \\ 0 & \gamma_-\end{array} \right]
\ \left[\begin{array}{cc} I_{n-1} & 0 \\ \widetilde{P}^-(\widehat{G}_n\Phi^+\gamma_-^{-1})  & 1\end{array} \right],
\end{equation}
\begin{equation}\label{II.6}
D= \left[\begin{array}{cc} I_{n-1} & 0 \\ 0 & r^k\end{array} \right];
\end{equation}
\begin{equation}\label{II.7}
G_+= \left[\begin{array}{cc} I_{n-1} & 0 \\ \widetilde{P}^+(\widehat{G}_n\Phi^+\gamma_-^{-1})\cdot r^{-k}  & 1\end{array}
\right]
\ \left[\begin{array}{cc} I_{n-1} & 0 \\ 0 & (-1)^{n-1}\gamma_+\end{array} \right] \ \cdot \ \Psi^+_e.
\end{equation}
Here \begin{equation}\label{II.4} \det
G=\gamma_-r^k\gamma_+
\end{equation} is a WH $p$-factorization of $\det G$ and $\Phi^+$ is a right inverse of $\Psi$.
\end{thm}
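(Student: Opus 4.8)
The plan is to reduce the $n\times n$ situation to a scalar one by a single algebraic identity, and then to invoke known perturbation results; the identity is set up as follows. A permutation $P$ of the rows of $G$ turns $T_G$ into $M_PT_G$, with $M_P$ the invertible operator of multiplication by the constant permutation matrix, and turns $\det G$ into $\pm\det G$; neither alters near or strict Fredholm equivalence, so in parts (a), (b) I may assume, as in (c), that the omitted row is the last one --- $G$ having bottom row $\widehat G_n$ and first $n-1$ rows $\Psi$. I fix a right inverse $\Phi^+$ of $\Psi$ over $\mathcal A$ (with $\mathcal A=H^+_\infty+C$ in (a) and $\mathcal A=H^+_\infty$ in (b), (c)), put $\Psi^+:=\Psi$, and build $N^+,\widetilde N^+$ from \eqref{II.3'}, \eqref{II.3} and $\Phi^+_e=[\Phi^+\ \ N^+]$, $\Psi^+_e=\left[\begin{smallmatrix}\Psi^+\\ \widetilde N^+\end{smallmatrix}\right]$ as in \eqref{II.2}. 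By Theorem~\ref{aug111}(iii), $\Phi^+_e$ and $\Psi^+_e$ lie in $\mathcal A^{n\times n}$, are mutually inverse, and have determinant $(-1)^{n-1}$; in particular $\Phi^+_e$ is invertible over $\mathcal A$. Multiplying $G$ on the right by $\Phi^+_e$ and using $\Psi\Phi^+=I_{n-1}$ together with $\Psi N^+=0$ (the $(1,2)$ block of $\Psi^+_e\Phi^+_e=I_n$) produces the block-lower-triangular matrix
\[
\Gamma:=G\Phi^+_e=\begin{bmatrix} I_{n-1} & 0\\ \widehat G_n\Phi^+ & \widehat G_nN^+\end{bmatrix},
\]
and comparing $\det\Gamma=(-1)^{n-1}\det G$ (from $\det\Phi^+_e=(-1)^{n-1}$) with $\det\Gamma=\widehat G_nN^+$ (block-triangularity) gives $\widehat G_nN^+=(-1)^{n-1}\det G$.

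For (a) and (b): since $G=\Gamma(\Phi^+_e)^{-1}$ with $\Phi^+_e$ invertible over $\mathcal A$, \cite[Theorem 5.5]{LS} (recalled around \eqref{i1.1}) gives that $T_G$ is nearly Fredholm equivalent to $T_\Gamma$; and when $\mathcal A=H^+_\infty$ one moreover has $T_G=T_\Gamma T_{\Psi^+_e}$ with $T_{\Psi^+_e}$ invertible (see \cite{MP86,LS}), so that $\ker T_G$, $\coker T_G$ and $\ker T_G^*$ are isomorphic to those of $T_\Gamma$. On $(H^+_p)^{n-1}\oplus H^+_p$ the operator $T_\Gamma$ equals $\left[\begin{smallmatrix}I & 0\\ T_{\widehat G_n\Phi^+} & T_{\widehat G_nN^+}\end{smallmatrix}\right]$ with bounded $(2,1)$ entry, and for any block-lower-triangular operator with invertible $(1,1)$ block the $2\times2$ block formulas (also for the adjoint) show that its kernel, cokernel and adjoint kernel are isomorphic to those of its $(2,2)$ corner, and that it is left-, right- or two-sidedly invertible precisely when that corner is; here the corner is $T_{\widehat G_nN^+}=(-1)^{n-1}T_{\det G}$. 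Hence $T_\Gamma$ is strictly Fredholm equivalent to $T_{\det G}$, and composing with the near equivalence proves (a). In (b), the two chains of isomorphisms together with Coburn's property of the scalar operator $T_{\det G}$ give $\ker T_G=\{0\}$ or $\ker T_G^*=\{0\}$, give the strict Fredholm equivalence of $T_G$ and $T_{\det G}$, and show that one- or two-sided invertibility passes between the two. (When $\det G\equiv0$, $\widehat G_nN^+\equiv0$ and both operators have infinite-dimensional kernel and cokernel, and the statements are read accordingly.)

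For (c): now $\Psi^+_e$ is invertible over $H^+_\infty$, and $\det G=\gamma_-r^k\gamma_+$ is a WH $p$-factorization with $k=-\Ind_p(\det G)=-\Ind T_{\det G}$; by (b), $\Ind T_G=\Ind T_{\det G}\ge0$, i.e. $k\le0$, so $r^{-k}$ is a rational function analytic and bounded in $\overline{\C^+}$ --- which is exactly where the hypothesis $\Ind T_{\det G}\ge0$ is used. Writing $B:=\widehat G_n\Phi^+$ and using $\widetilde P^++\widetilde P^-=\mathrm{id}$ (immediate from \eqref{II.1} and $P^++P^-=I$) and the commutation of the scalar $r^{\pm k}$ with the row $B\gamma_-^{-1}$, a direct multiplication of the matrices in \eqref{II.5}--\eqref{II.7} gives
\[
G_-D\bigl[G_+(\Psi^+_e)^{-1}\bigr]=\begin{bmatrix} I_{n-1} & 0\\ \gamma_-\widetilde P^-(B\gamma_-^{-1}) & \gamma_-r^k\end{bmatrix}\begin{bmatrix} I_{n-1} & 0\\ \widetilde P^+(B\gamma_-^{-1})r^{-k} & (-1)^{n-1}\gamma_+\end{bmatrix}=\begin{bmatrix} I_{n-1} & 0\\ B & (-1)^{n-1}\det G\end{bmatrix}=\Gamma,
\]
the $(2,1)$ entry collapsing to $\gamma_-(\widetilde P^-+\widetilde P^+)(B\gamma_-^{-1})=B$ and the $(2,2)$ entry being $(-1)^{n-1}\gamma_-r^k\gamma_+=(-1)^{n-1}\det G$; hence $G_-DG_+=\Gamma\,\Psi^+_e=G\Phi^+_e\Psi^+_e=G$, which is \eqref{2.5} with $D$ as in \eqref{II.6}. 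There remains the verification that $G_\pm$ satisfy the membership conditions \eqref{2.7}, \eqref{2.8} and the boundedness condition \eqref{2.9}: the off-diagonal entries of the triangular factors are $\widetilde P^\pm(B\gamma_-^{-1})$, which lie in the relevant spaces $\mathcal L_p^\pm$, $\mathcal L_{p'}^\pm$ by the mapping properties of $\widetilde P^\pm$, times the $\overline{\C^+}$-bounded rational function $r^{-k}$; the diagonal factors contribute $\gamma_\pm$, which obey the scalar counterparts of \eqref{2.7}, \eqref{2.8} since \eqref{II.4} is a WH $p$-factorization; $\Psi^+_e$ is bounded and invertible over $H^+_\infty$; and \eqref{2.9} reduces, via the block-lower-triangular structure of $G_-$ and of $G_-^{-1}$, to boundedness on $L_p(\R)$ of the scalar operator $\gamma_-P^+\gamma_-^{-1}I$, which is the scalar case of \eqref{2.9} guaranteed by \eqref{II.4}. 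Uniqueness of the diagonal middle factor then identifies the partial indices of $G$ as $0,\dots,0,k$.

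I expect the hard part to be this last verification in (c): the careful bookkeeping with the $H_p$/$H_{p'}$ exponents and the $\lambda_\pm$-weights, and the reduction of the boundedness condition to the scalar operator $\gamma_-P^+\gamma_-^{-1}I$. The algebraic core --- the identity $G\Phi^+_e=\Gamma$ and the factorization computation above --- is short once Theorem~\ref{aug111} is in place, and parts (a), (b) then follow from the cited perturbation theorems together with the elementary facts about block-triangular operators.
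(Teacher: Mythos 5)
Your overall strategy is the same as the paper's: the whole argument rests on the identity $G\Phi^+_e=\widetilde G$ with $\widetilde G=\left[\begin{smallmatrix}I_{n-1}&0\\ \widehat G_n\Phi^+&(-1)^{n-1}\det G\end{smallmatrix}\right]$ coming from Theorem~\ref{aug111}, on \cite[Theorem 5.5]{LS} for part (a), and on the explicit block computation for the factorization in (c); your multiplication verifying $G_-DG_+=\widetilde G\,\Psi^+_e=G$ is correct. Where you genuinely diverge is in part (b): the paper passes through the majorization of the partial indices of the triangular symbol $\widetilde G$ (citing \cite{Spit801} and \cite[Theorem 4.7]{LS}) to match the defect numbers of $T_G$ and $T_{\det G}$, whereas you compute directly that for $T_{\widetilde G}=\left[\begin{smallmatrix}I&0\\ T_{\widehat G_n\Phi^+}&T_{(-1)^{n-1}\det G}\end{smallmatrix}\right]$ the kernel, cokernel and adjoint kernel are isomorphic to those of the scalar corner, and that one-sided inverses lift across the triangular structure. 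This is more elementary, avoids the factorization-theoretic majorization machinery, and even gives the transfer of one-sided invertibility without first reducing to the Fredholm case; it is a legitimate and arguably cleaner route to (b).

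The one genuine weak spot is your verification of the boundedness condition \eqref{2.9} in part (c). Writing $\beta_-=\widetilde P^-(\widehat G_n\Phi^+\gamma_-^{-1})$, one has $G_-=\left[\begin{smallmatrix}I&0\\ \gamma_-\beta_-&\gamma_-\end{smallmatrix}\right]$, and a direct computation gives
\[
G_-P^+G_-^{-1}=\begin{bmatrix}P^+&0\\ \gamma_-\bigl(\beta_-P^+-P^+\beta_-\bigr)&\gamma_-P^+\gamma_-^{-1}\end{bmatrix},
\]
so the condition does \emph{not} reduce to the scalar operator $\gamma_-P^+\gamma_-^{-1}I$ alone: the $(2,1)$ entry involves the commutator of $P^+$ with multiplication by $\beta_-$, and $\beta_-$ is only in ${\mathcal L}_{p'}^-$, not in $L_\infty$, so this term is not obviously bounded. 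The paper sidesteps exactly this point: since part (b) already establishes that $G$ is WH $p$-factorable, and \eqref{II.5}--\eqref{II.7} provide a representation \eqref{2.5} whose factors satisfy \eqref{2.7}--\eqref{2.8}, condition \eqref{2.9} holds automatically by \cite[Theorem 3.8]{LS}. Your proof of (c) is repaired by replacing your reduction with that citation (or an equivalent uniqueness-of-factorization argument); as written, the claimed reduction is incorrect.
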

Note that in \eqref{II.4} $k\leq 0$ since it is opposite to
$\Ind T_{\det G}$. This condition is essential for the statement (c) to
be valid,  while of course omitting the $n$-th
row (as opposed to some other row) is just to simplify the notation.

\begin{proof}  (a) Since $\Psi$ is right invertible over $H_\infty^++C$, by Theorem~\ref{aug111} (taking (\ref{I.24}) into account) $\Phi^+_e\in {\mathcal
G}(H_\infty^++C)^{n \times n}$ and --- see (\ref{I.19}), where $G$ takes
the place of $\Psi^+_e$ --- we have
\begin{equation} \label{II.8}
G=\widetilde{G}(\Phi^+_e)^{-1}, \text{ where } \widetilde{G}=
\left[\begin{array}{cc} I_{n-1} & 0_{(n-1)\times 1} \\ \widehat{G}_n \Phi^+ & (-1)^{n-1}{\rm det}\, G
\end{array}\right].
\end{equation}
By \cite[Theorem 5.5]{LS} (or rather its version for the right
factorization) $G$ is $WH$ $p$-factorable only simultaneously with
$\widetilde{G}$. In its turn, this matrix function  is block triangular, with
one of the blocks (the identity matrix) obviously $WH$ $p$-factorable.
According to \cite[Corollary 4.1]{LS}, $\widetilde{G}$  itself is $WH$
$p$-factorable only simultaneously with its other diagonal block, that
is, the function $\det G$. In the language of Toeplitz operators this
means that $T_G$ and $T_{\det G}$ are nearly Fredholm equivalent.

(b) If moreover  $\Psi^+\in
(H_\infty^+)^{(n-1)\times n}$ is right invertible over $H_\infty^+$, then
$(\Phi_e^+)^{-1}$ (which is equal to $\Psi^+_e$ by Lemma
\ref{aug111}) is invertible in $(H_\infty^+)^{n \times n}$.
Consequently, from \eqref{II.8}, $\ker T_{G}=\{0\}\Leftrightarrow \ker T_{\widetilde{G}}=\{0\}\Leftrightarrow \ker T_{\det G}=\{0\}$, and analogously $\ker T^*_{G}=\ker T_{G^*}=\{0\}\Leftrightarrow \ker T^*_{\det G}=\ker T_{\overline{\det G}}=\{0\}$. Since, by Coburn's property, $\ker T_{\det G}$ or $\ker T^*_{\det G}$ is $\{0\}$, the same is true regarding $\ker T_G$ and $\ker T^*_{G}$. On the other hand, since in \eqref{II.8} we have $(\Phi_e^+)^{\pm 1}\in (H_\infty^+)^{n\times n}$, not only $G$ and $\widetilde{G}$ admit WH $p$-factorizations along with $\det G$, but also their partial indices coincide.
Due to the triangular structure of $\widetilde{G}$, the set of its partial
indices is majorized  by the set of the indices of its diagonal entries
\cite{Spit801}, see also \cite[Theorem 4.7]{LS}. Without going into
details of the majorization relation and its properties, we note here
only the following pertinent piece of information:  since the indices of
the diagonal entries of $\widetilde{G}$ are $0,\ldots, 0$ ($n-1$ times)
and $k$, all its partial indices are of the same sign as $k$, and their
sum amounts to $k$. According to \eqref{A2.10}, the defect numbers
of $T_G$ are the same as those of $T_{\det G}$. In particular, one of
them is zero, and the other coincides in absolute value with $\Ind
T_{\det G}$. This guarantees one sided invertibility (which becomes
two sided if and only if $k=0$).

(c) Under the condition $k\leq 0$, the matrix functions $G_\pm$ defined by
\eqref{II.5}-\eqref{II.7} satisfy \eqref{2.7}, \eqref{2.8}. Since $G$ is $WH$
$p$-factorable and \eqref{2.5} holds, condition \eqref{2.9} is satisfied
automatically \cite[Theorem 3.8]{LS}, and \eqref{II.5}--\eqref{II.7} deliver
the desired $WH$ $p$-factorization. In particular, the partial indices of
$G$ are $0,\ldots,0$ ($n-1$ times) and $k$, so they coincide with the
indices of the diagonal entries of $\widetilde{G}$.
\end{proof}

The next result is a dual version of Theorem \ref{aug112}.
\begin{thm}\label{th:aug113}
Let $G\in (L_\infty (\R))^{n \times n}$, and let $\Phi$ be an $n\times
(n-1)$ submatrix of $G$ obtained by omitting one column in $G$ (it
will be assumed that the $n$th column is omitted, essentially without
loss of generality).

${\rm (a)}$  If $\Phi\in (H_\infty^-+C)^{n\times (n-1)}$, and if $\Phi$ is left invertible over $H_\infty^-+C$, then
$T_G$ and $T_{\det G}$ are nearly Fredholm equivalent, for every fixed $p\in ]1,\infty[$.

${\rm (b)}$ If moreover $\Phi\in (H^-_\infty)^{n\times (n-1)}$ and $\Phi$ is left invertible over $H^-_\infty$, then, for any fixed $p\in (1,\infty)$, $\ker T_G=\{0\}$ or $\ker T^*_G=\{0\}$, and the operator $T_G$ is strictly Fredholm equivalent to $T_{\det G}$. In particular, $T_G$ is invertible if and only if so is $T_{\det G}$.

${\rm (c)}$ If, in the setting of ${\rm (b)}$, in addition $\Ind\det G\leq 0$ and the omitted column of $G$ is $\widehat{G}_n$, its last one, then a $WH$ $p$-factorization of $G$ is given by {\em\eqref{2.5}} with \[ G_-=\Phi_e^-\left[\begin{matrix} I_{n-1} &  r^{-k}\widetilde{P}_-(\gamma_+^{-1}\Psi_-\widehat{G}_n)\\ 0 & (-1)^{n-1}\gamma_-\end{matrix}\right], \quad D=\left[\begin{matrix} I_{n-1} & 0 \\ 0 & r^k\end{matrix}\right], \] \[ G_+= \left[\begin{matrix} I_{n-1} &  \gamma_+\widetilde{P}_+(\gamma_+^{-1}\Psi_-\widehat{G}_n) \\ 0 & \gamma_+\end{matrix}\right]. \] Here $\det G=\gamma_-r^k\gamma_+$ is a $WH$ $p$-factorization of $\det G$, $\Psi_-$ is a left inverse of $\Phi$, and $\Phi_e^-=(\Psi_e^-)^{-1}$ is given by \eqref{II.2}--\eqref{II.3}.
\end{thm}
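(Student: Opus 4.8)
The plan is to transpose the proof of Theorem~\ref{aug112}, interchanging rows with columns and $H^+$ with $H^-$. Since $\Phi$ is left invertible over $H^-_\infty+C$ (resp., over $H^-_\infty$), Corollary~\ref{aug86} applied with ${\mathcal A}=H^-_\infty+C$ (resp., ${\mathcal A}=H^-_\infty$; one may also invoke Theorem~\ref{aug93}(a)) produces a left inverse $\Psi$ of $\Phi$ in that algebra, and, with $\Phi$ and $\Psi$ playing the roles of $\Phi^-$ and $\Psi^-$ in \eqref{II.2}--\eqref{II.3}, Theorem~\ref{aug111} gives $\Phi^-_e\Psi^-_e=\Psi^-_e\Phi^-_e=I_n$ and $\det\Phi^-_e=(-1)^{n-1}$, so that $\Phi^-_e\in{\mathcal G}(H^-_\infty+C)^{n\times n}$ (resp., ${\mathcal G}(H^-_\infty)^{n\times n}$) with $(\Phi^-_e)^{-1}=\Psi^-_e$. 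Writing $G=[\,\Phi\ \ \widehat G_n\,]$ and using $\Psi\Phi=I_{n-1}$, the relation $\widetilde N^-\Phi=0$ contained in Theorem~\ref{aug111}(ii), and cofactor expansion of $\det G$ along the omitted (last) column, which yields $\widetilde N^-\widehat G_n=(-1)^{n-1}\det G$, one obtains $G=\Phi^-_e\Psi^-_e G=\Phi^-_e\widetilde G$ with the block upper triangular
\[ \widetilde G=\left[\begin{array}{cc} I_{n-1} & \Psi\widehat G_n\\ 0_{1\times(n-1)} & (-1)^{n-1}\det G\end{array}\right]. \]

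For part (a), $G$ is of the form \eqref{i1.1} with $M_-=\Phi^-_e\in{\mathcal G}(H^-_\infty+C)^{n\times n}$, $M_+=I_n$, $G_0=\widetilde G$, so $T_G$ and $T_{\widetilde G}$ are nearly Fredholm equivalent by \cite[Theorem 5.5]{LS}; since $\widetilde G$ is block triangular with the trivially WH $p$-factorable diagonal block $I_{n-1}$, \cite[Corollary 4.1]{LS} (together with Theorem~\ref{feb31}) gives that $T_{\widetilde G}$ and $T_{\det G}$ are nearly Fredholm equivalent as well. For part (b), $(\Phi^-_e)^{\pm1}\in(H^-_\infty)^{n\times n}$; using the identity $T_{A_-B}=T_{A_-}T_B$ for $A_-\in H^-_\infty$ and the invertibility of $T_{A_-}$ (with inverse $T_{A_-^{-1}}$) for $A_-\in{\mathcal G}H^-_\infty$, the equality $G=\Phi^-_e\widetilde G$ gives $T_G=T_{\Phi^-_e}T_{\widetilde G}$ with $T_{\Phi^-_e}$ invertible, so $\ker T_G=\ker T_{\widetilde G}$ and, taking adjoints, $\ker T^*_G\cong\ker T^*_{\widetilde G}$. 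The block triangular structure of $\widetilde G$ makes $T_{\widetilde G}$ block (upper) triangular with diagonal blocks $I$ and $(-1)^{n-1}T_{\det G}$, so $\ker T_{\widetilde G}\cong\ker T_{\det G}$ and $\ker T^*_{\widetilde G}\cong\ker T^*_{\det G}$. Hence $\ker T_G\cong\ker T_{\det G}$ and $\ker T^*_G\cong\ker T^*_{\det G}$: $T_G$ and $T_{\det G}$ have the same defect numbers (so are strictly Fredholm equivalent --- this can also be seen from \cite[Theorem 5.5]{LS} for the $H^-_\infty$ case together with the majorization of the partial indices of the triangular $\widetilde G$ by those of its diagonal entries \cite{Spit801},\cite[Theorem 4.7]{LS}), and Coburn's property for the scalar symbol $\det G$ forces $\ker T_G=\{0\}$ or $\ker T^*_G=\{0\}$; in particular $T_G$ is invertible if and only if $T_{\det G}$ is.

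For part (c), assume $\Ind\det G\le0$, which amounts to $k\ge0$ in a WH $p$-factorization $\det G=\gamma_-r^k\gamma_+$, and that the omitted column of $G$ is the last one. Put $\phi:=\gamma_+^{-1}\Psi\widehat G_n$; since $\lambda_+^{-1}\gamma_+^{-1}\in H^+_p$ we have $\phi/(\xi+i)\in L_p$, whence $\phi=\widetilde P^+\phi+\widetilde P^-\phi$ with $\widetilde P^\pm$ as in \eqref{II.1}. Cancelling $\Phi^-_e$ on the left and carrying out the block multiplication, using $\widetilde P^+\phi+\widetilde P^-\phi=\phi$, $\gamma_+\phi=\Psi\widehat G_n$ and $\gamma_-r^k\gamma_+=\det G$, verifies that $G=G_-DG_+$ with the $G_-$, $D$, $G_+$ displayed in the statement (there $\widetilde P_\pm$ stands for $\widetilde P^\pm$). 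One then checks, using $(\Phi^-_e)^{\pm1}\in(H^-_\infty)^{n\times n}$, the range properties of $\widetilde P^\pm$, the membership relations \eqref{2.7}--\eqref{2.8} for the scalar factors $\gamma_\pm$, the representation \eqref{aug81}, and $k\ge0$ (so that $r^{-k}\in H^-_\infty$), that $G_-$ and $G_+$ satisfy \eqref{2.7}--\eqref{2.8}. As $G$ is already known, by part (a), to be WH $p$-factorable ($\det G$ being so), condition \eqref{2.9} holds automatically by \cite[Theorem 3.8]{LS}; hence \eqref{2.5} with these factors is a WH $p$-factorization of $G$, and by uniqueness of the middle factor the partial indices of $G$ are $0,\dots,0,k$.

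The step I expect to be the main obstacle is this last verification --- confirming \eqref{2.7}--\eqref{2.8} for the explicit $G_\pm$ of part (c): it requires tracking precisely the Hardy space each block of $G_\pm$ and of $G_\pm^{-1}$ lies in, and hinges delicately on $k\ge0$ and on the exact placement of the factors $\widetilde P^\pm$ and $\gamma_\pm$. The remaining steps are a fairly mechanical transposition of the proof of Theorem~\ref{aug112}.
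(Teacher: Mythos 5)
Your proposal is correct and is essentially the paper's own (implicit) proof: the paper gives no separate argument for Theorem~\ref{th:aug113}, stating only that it is the dual of Theorem~\ref{aug112}, and your transposition --- building $\Phi_e^-$ via Theorem~\ref{aug111}, writing $G=\Phi_e^-\widetilde G$ with $\widetilde G$ block upper triangular with lower-right entry $(-1)^{n-1}\det G$, and then invoking \cite[Theorem 5.5]{LS}, \cite[Corollary 4.1]{LS}, Coburn's property, and the explicit block factorization of $\widetilde G$ for part (c) --- is exactly that dualization, including the correct reading of the index condition as $k\geq 0$ so that $r^{-k}$ is analytic in $\C^-$. The only cosmetic deviation is that in part (b) you identify kernels directly through $T_G=T_{\Phi_e^-}T_{\widetilde G}$ rather than via the partial-index majorization used in the proof of Theorem~\ref{aug112}(b), but you note both routes and they give the same conclusion.
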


Of course, formulas similar to those given in Theorem~\ref{th:aug113}(c) hold when the removed column is not the last one.

In the previous results we have used the one sided invertibility of a submatrix of $G$ to study the Fredholmness, and other associated properties, of the Toeplitz operator $T_G$. Now we turn to the study of the same properties of $T_G$ based on one sided invertibility of a solution to a Riemann-Hilbert problem with coefficient $G$.

\begin{thm}\label{aug117} Let $G\in (L_\infty(\R))^{n \times n}$, and let
\begin{equation}\label{II.10} G\Phi^+=\Phi^-, \quad \Phi^{\pm}\in (H_{\infty}^{\pm}+C)^{n \times (n-1)},
\end{equation}
where $\Phi^\pm$ are left invertible over $H_\infty^\pm+C$. Then:
\begin{itemize}
\item[(i)] $T_G$ is nearly Fredholm equivalent to $T_{\det G}$;
\item[(ii)] If moreover
    $\Phi^{\pm}$ are left invertible over $H_\infty^\pm$, with left
    inverses $\Psi^\pm \in (H_\infty^\pm)^{(n-1)\times n}$ for
    $\Phi^\pm$, respectively, then $\ker T_G=\{0\}$ or $\ker T^*_G=\{0\}$, and $T_G$ is strictly Fredholm equivalent to $T_{\det G}$. In particular, $T_G$ is invertible if and only if $T_{\det G}$ is invertible.

\item[(iii)]   Assuming that
\begin{equation}\label{II.10A}
\det G=\gamma_- r^k \gamma_+ \qquad \text{ with } k\geq 0
\end{equation}
is a WH $p$-factorization for ${\rm det}\, G$, a WH $p$-factorization for $G$ is given by {\em\eqref{2.5}} with
\begin{equation}\label{II.10B}
G_-=\Phi_e^- \ \cdot \ \left[\begin{array}{cc} I_{n-1} & 0 \\ 0 & \gamma_- \end{array}\right] \
\left[\begin{array}{cc} I_{n-1} & \alpha_- \\0 & 1 \end{array}\right],
\end{equation}
\begin{equation}\label{II.10C}
D=\left[\begin{array}{cc} I_{n-1} & 0_{(n-1)\times 1}  \\ 0_{1\times (n-1)}  & r^k \end{array}\right]
\end{equation}
\begin{equation}\label{II.10D}
G_+=
\left[\begin{array}{cc} I_{n-1} & \alpha_+ \\ 0_{1 \times (n-1)} & 1 \end{array}\right] \ \left[\begin{array}{cc}
I_{n-1} & 0 \\ 0 & \gamma_+
\end{array}\right] \ \cdot \ \Psi_e^+,
\end{equation}
where $\Phi^-_e$, $\Psi^+_e$ are given by {\em (\ref{II.2})--(\ref{II.3})},
\begin{equation}\label{II.10E}
\alpha_+= \widetilde{P}^+(Q) \in ({\mathcal  L}_p^+)^{(n-1)\times 1},
\end{equation}
\begin{equation}\label{II.10F}
\alpha_-= r^{-k} \widetilde{P}^-(Q) \in ({\mathcal  L}_p^-)^{(n-1)\times 1},
\end{equation}
and where
\begin{equation}\label{II.15}
Q:=\Psi^-GN^+\in (L_\infty (\R))^{(n-1)\times 1},
\end{equation}
with $N^+$ as in {\em (\ref{II.3'})}.
\end{itemize}
\end{thm}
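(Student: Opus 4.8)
The plan is to combine the extension construction of Theorem~\ref{aug111} with the Riemann-Hilbert relation \eqref{II.10} to write $G$, modulo one-sided invertible factors over $H^\pm_\infty+C$ (respectively $H^\pm_\infty$), as a block upper triangular matrix function with diagonal blocks $I_{n-1}$ and $\det G$, and then to read off all three assertions from the behaviour of Toeplitz operators under such a representation, just as in the proofs of Theorems~\ref{aug112} and \ref{th:aug113}. First, choose left inverses $\Psi^\pm$ of $\Phi^\pm$ over $H^\pm_\infty+C$ (in (ii) and (iii) the prescribed ones over $H^\pm_\infty$) and form $\Phi^\pm_e=[\Phi^\pm\ N^\pm]$, $\Psi^\pm_e=\left[\begin{smallmatrix}\Psi^\pm\\ \widetilde N^\pm\end{smallmatrix}\right]$ with $N^\pm,\widetilde N^\pm$ as in \eqref{II.3'}, \eqref{II.3}; by Theorem~\ref{aug111}(iii) these are mutually inverse with $\det\Phi^\pm_e=\det\Psi^\pm_e=(-1)^{n-1}$, hence $\Phi^\pm_e,\Psi^\pm_e\in\mathcal G(H^\pm_\infty+C)^{n\times n}$ (their determinants being units), and lie in $\mathcal G(H^\pm_\infty)^{n\times n}$ under the hypothesis of (ii). Using $\Phi^+_e\Psi^+_e=I_n$ and \eqref{II.10}, $G=G\Phi^+_e\Psi^+_e=[\,G\Phi^+\ GN^+\,]\Psi^+_e=[\,\Phi^-\ GN^+\,]\Psi^+_e=\Phi^-_e\bigl(\Psi^-_e[\,\Phi^-\ GN^+\,]\bigr)\Psi^+_e$. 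Since $\Psi^-_e\Phi^-_e=I_n$ forces $\widetilde N^-\Phi^-=0$, we get $\Psi^-_e\Phi^-=\left[\begin{smallmatrix}I_{n-1}\\ 0\end{smallmatrix}\right]$, while $\Psi^-_eGN^+=\left[\begin{smallmatrix}Q\\ \widetilde N^-GN^+\end{smallmatrix}\right]$ with $Q$ as in \eqref{II.15}; therefore $G=\Phi^-_e\,\widetilde G\,\Psi^+_e$ with $\widetilde G=\left[\begin{smallmatrix}I_{n-1}&Q\\ 0&\widetilde N^-GN^+\end{smallmatrix}\right]$, and comparing determinants (using $\det\Phi^-_e\det\Psi^+_e=1$) gives $\widetilde N^-GN^+=\det G$.

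For (i), $G=\Phi^-_e\widetilde G(\Phi^+_e)^{-1}$ has the form \eqref{i1.1} with $G_0=\widetilde G$ and $M_\pm=\Phi^\pm_e\in\mathcal G(H^\pm_\infty+C)^{n\times n}$, so $T_G$ is nearly Fredholm equivalent to $T_{\widetilde G}$ by \cite[Theorem~5.5]{LS}; and $\widetilde G$ being block triangular with diagonal blocks $I_{n-1}$ (trivially WH $p$-factorable) and $\det G$, \cite[Corollary~4.1]{LS} shows $T_{\widetilde G}$, hence $T_G$, is nearly Fredholm equivalent to $T_{\det G}$. For (ii) the factors $M_\pm$ are invertible over $H^\pm_\infty$, so $T_G$ is even strictly Fredholm equivalent to $T_{\widetilde G}$ by \cite{LS,MP86}; the triangular form yields $\ker T_{\widetilde G}\cong\ker T_{\det G}$ and, dually, $\ker T^*_{\widetilde G}\cong\ker T^*_{\det G}$ (equivalently, when $T_{\det G}$ is Fredholm, the partial indices of $\widetilde G$ are majorized by those, $0,\dots,0,\Ind_p(\det G)$, of its diagonal entries, \cite{Spit801}, \cite[Theorem~4.7]{LS}, so the defect numbers of $T_{\widetilde G}$ and $T_{\det G}$ coincide by \eqref{A2.10}); hence $T_G$ is strictly Fredholm equivalent to $T_{\det G}$, Coburn's property for the scalar symbol $\det G$ forces one of $\ker T_G$, $\ker T^*_G$ to vanish, and $T_G$ is invertible if and only if $T_{\det G}$ is.

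For (iii), with $\det G=\gamma_-r^k\gamma_+$ and $k\ge 0$, it remains to factor $\widetilde G$. Peeling off $\diag[I_{n-1},\gamma_-]$ on the left and $\diag[I_{n-1},\gamma_+]$ on the right reduces $\widetilde G$ to $\left[\begin{smallmatrix}I_{n-1}&Q\gamma_+^{-1}\\ 0&r^k\end{smallmatrix}\right]$; splitting its off-diagonal block additively via $\widetilde P^++\widetilde P^-=\mathrm{id}$ and absorbing $D=\diag[I_{n-1},r^k]$ by means of the weight $r^{-k}$ gives $\left[\begin{smallmatrix}I_{n-1}&Q\gamma_+^{-1}\\ 0&r^k\end{smallmatrix}\right]=\left[\begin{smallmatrix}I_{n-1}&\alpha_-\\ 0&1\end{smallmatrix}\right]D\left[\begin{smallmatrix}I_{n-1}&\alpha_+\\ 0&1\end{smallmatrix}\right]$ with $\alpha_+=\widetilde P^+(Q\gamma_+^{-1})$ and $\alpha_-=r^{-k}\widetilde P^-(Q\gamma_+^{-1})$; reinstating the outer diagonal together with the bounded analytic factors $\Phi^-_e$, $\Psi^+_e$ then produces $G_\pm$ of the shape \eqref{II.10B}, \eqref{II.10D} with $D$ as in \eqref{II.10C} and $G=G_-DG_+$. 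One then checks that $G_\pm$ satisfy \eqref{2.7}, \eqref{2.8}: a routine bookkeeping argument with the weights $\lambda_\pm$, tracing them through $\gamma_\pm$ (which obey the scalar analogues of \eqref{2.7}, \eqref{2.8}), through $\alpha_\pm\in(\mathcal L^\pm_p)^{(n-1)\times1}$, and through the bounded analytic factors, and it is exactly here that $k\ge 0$ is used (so that $r^{-k}\in H^-_\infty$). Condition \eqref{2.9} is then automatic by \cite[Theorem~3.8]{LS}, since $G$ admits a WH $p$-factorization by part (a); reading off $D$ shows the partial indices of $G$ are $0,\dots,0$ ($n-1$ times) and $k$.

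I expect the main obstacle to be this last verification in (iii) --- confirming that the explicitly written factors $G_\pm$ lie in the correct weighted Hardy classes, which is precisely where the sign condition $k\ge 0$ and the exact definition of the modified projections $\widetilde P^\pm$ are needed --- together with pinning down the sign and normalisation conventions in \eqref{II.3'}, \eqref{II.3} so that Theorem~\ref{aug111}(iii) applies verbatim and the $(n,n)$ entry of $\widetilde G$ comes out as precisely $\det G$. The remaining steps, including the passage from the Fredholm-equivalence statements to the explicit factorization, are structural and closely parallel the proofs of Theorems~\ref{aug112} and \ref{th:aug113}.
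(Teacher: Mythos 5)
Your proposal is correct and takes essentially the same route as the paper: the paper's proof likewise forms $\Phi_e^\pm,\Psi_e^\pm$ via Theorem~\ref{aug111}, passes to $G_0=\Psi_e^-G\Phi_e^+$, uses \eqref{II.10} and $\Psi_e^\pm\Phi^\pm=\left[\begin{smallmatrix}I_{n-1}\\ 0\end{smallmatrix}\right]$ to see that $G_0$ is block upper triangular with diagonal $I_{n-1}$ and $\det G$, and then reads off (i)--(iii) from \cite[Theorem 5.5, Corollary 4.1]{LS}, the majorization of partial indices, and Coburn's property, exactly as you do. One remark on (iii): your explicit computation yields $\alpha_+=\widetilde{P}^+(Q\gamma_+^{-1})$ and $\alpha_-=r^{-k}\widetilde{P}^-(Q\gamma_+^{-1})$, whereas \eqref{II.10E}--\eqref{II.10F} apply $\widetilde{P}^\pm$ to $Q$ itself; since the $(1,2)$ block of $G_-DG_+$ comes out as $(\alpha_++\alpha_-r^k)\gamma_+$, your version is the one for which $G_-DG_+=G$ actually holds (compare the factor $\gamma_-^{-1}$ inside $\widetilde{P}^\pm$ in \eqref{II.5}--\eqref{II.7}), so this is a typo in the stated formulas rather than a flaw in your argument.
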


\begin{proof} (i) Let $\Phi_e^\pm$, $\Psi_e^\pm$ be defined as in (\ref{II.2}),
(\ref{II.3}), where $\Psi^{\pm}\in  (H_{\infty}^{\pm}+C)^{(n-1) \times n}$
is a left inverse of $\Phi^\pm$ over $H_{\infty}^{\pm}$. From
Theorem~\ref{aug111} it follows that $\Psi_e^{\pm}\in {\mathcal  G}(H_{\infty}^{\pm}+C)^{n \times n}$ and
$(\Psi_e^{\pm})^{-1}=\Phi^\pm_e$.

Defining
\begin{equation}\label{II.11}
G_0=\Psi^-_eG\Phi^+_e,
\end{equation}
we can rewrite (\ref{II.10}) as
\begin{equation}\label{II.12}
G_0\Psi^+_e\Phi^+=\Psi^-_e\Phi^-.
\end{equation}
On the other hand, it also follows from Theorem~\ref{aug111} (see (\ref{I.22}) or (\ref{I.24}), taking
(\ref{I.16}) into account) that
\begin{equation}\label{II.13}
\Psi_e^\pm\Phi^\pm =\left[\begin{array}{c} I_{n-1} \\ 0_{1 \times (n-1)} \end{array}\right],
\end{equation}
therefore (\ref{II.12}) implies that $G_0$ has the form
\begin{equation}\label{II.14}
G_0=\left[\begin{array}{cc} I_{n-1} & Q \\ 0_{1\times (n-1)} & {\rm det}\, G \end{array}\right].
\end{equation}
In particular, ${\rm det}\, G={\rm det}\, G_0$.

From (\ref{II.11}) it follows according to \cite[Theorem 5.5]{LS} that $T_G$ is Fredholm if and only if $T_{G_0}$ is Fredholm, and this in turn
is equivalent to $T_{{\rm det}\, G}$ being Fredholm (by (\ref{II.14})).

 (ii) If  $\Phi^\pm\in (H_\infty^\pm)^{n
\times (n-1)}$ and $\Phi^\pm$ is left invertible over
$H_\infty^\pm$, with a left inverse $\Psi^\pm$, then
$$ \Psi^-_e\in {\mathcal  G}\left((H_\infty^-)^{n \times n}\right), \qquad
 \Phi^+_e\in {\mathcal  G}\left((H_\infty^+)^{n \times n}\right), $$
and it follows that $T_G$ is strictly Fredholm equivalent to $T_{\det G}$ and that $\ker T_G=\{0\}$ or $\ker T_G^*=\{0\}$ (see a
similar reasoning in the proof of Theorem~\ref{aug112}).

(iii)  The formulas for $G_\pm$ and $D$ follow from
$$ G=\Phi^-_eG_0\Psi^+_e, $$
together with (\ref{II.14}) and (\ref{II.10A}).\end{proof}

\section{Special cases}\label{s:Final}

Let $G\in L_\infty^{n\times n}$ with all rows but one having elements in  $M_\infty^+$ (the case of all columns but one having elements in $M^-_\infty$ can be treated analogously). Assume for simplicity that
\eq{6.1} G=\left[\begin{matrix} \Psi \\ g_n\end{matrix}\right] \text{ with } \Psi\in (M_\infty^+)^{(n-1)\times n}, \ g_n\in L_\infty^{1\times n}. \en Then the following results hold. \begin{thm}\label{th:6.1} {\em (i)} If $G$ is unitary with constant determinant and $g_n^T\in MCT_n^-$, then $T_G$ is Fredholm for all $p\in (1,\infty)$. If moreover $\Psi\in (H_\infty^+)^{(n-1)\times n}$ and $g_n^T\in HCT_n^-$, then $T_G$ is invertible.

{\em (ii)} If $G$ is (complex) orthogonal with constant determinant and $g_n^T\in MCT_n^+$, then $T_G$ is Fredholm for all $p\in (1,\infty)$. If moreover $\Psi\in (H_\infty^+)^{(n-1)\times n}$ and $g_n^T\in HCT_n^+$, then $T_G$ is invertible.

{\em (iii)} If one of the $(n-1)\times (n-1)$ minors of $\Psi$ is invertible in $M^+_\infty$, then $T_G$ is nearly Fredholm equivalent to $T_{\det G}$. If the above mentioned minor is in fact invertible in $H^+_\infty$, then $T_G$ is strictly Fredholm equivalent to $T_{\det G}$ and $\ker T_G=\{0\}$ or $\ker T_G^*=\{0\}$.  \end{thm}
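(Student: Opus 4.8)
The plan is to reduce each assertion to a situation already covered by the machinery of Section~\ref{s:osift}, exploiting the algebraic constraints (unitarity, orthogonality, invertibility of a minor) to produce a one-sided invertible submatrix of $G$ to which Theorem~\ref{aug112} or Theorem~\ref{th:aug113} applies. For part (iii) this is immediate: if one of the $(n-1)\times(n-1)$ minors of $\Psi$ is invertible in $M^+_\infty$, then by Theorem~\ref{aug93}(b) (in its ``right invertible'' incarnation, see the Remark following Theorem~\ref{aug910}) the row-tuple $[\Delta_{1,\cdot}(\Psi),\ldots,\Delta_{n,\cdot}(\Psi)]$ lies in $MCT^+_n$, hence $\Psi\in(M^+_\infty)^{(n-1)\times n}$ is right invertible over $M^+_\infty\subseteq H^+_\infty+C$. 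One then invokes Theorem~\ref{aug112}(a) to get near Fredholm equivalence of $T_G$ and $T_{\det G}$; when the minor is invertible in $H^+_\infty$ the same reasoning gives right invertibility over $H^+_\infty$, and Theorem~\ref{aug112}(b) yields the strict Fredholm equivalence and Coburn's property. (Here it is harmless that $\Psi$ has entries in $M^+_\infty$ rather than $H^+_\infty+C$, since $M^+_\infty\subseteq H^+_\infty+C$; and one should note that right invertibility over $M^+_\infty$ need not follow from invertibility of just one minor unless the corresponding corona tuple is genuinely invertible — so the hypothesis is used exactly to guarantee membership in $MCT^+_n$ resp. $HCT^+_n$.)

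For part (i), the key observation is that if $G$ is unitary then $G^{-1}=G^*$, and since $\det G$ is a nonzero constant, $G^{-1}=(\det G)^{-1}\adj G$, so the last \emph{column} of $G^{-1}$ is a constant multiple of the cofactor vector built from $\Psi$, i.e. from the first $n-1$ rows of $G$; equivalently $\overline{g_n}$ (the last row of $G^*=G^{-1}$, transposed) is, up to a nonzero constant, $[\Delta_{\cdot,1}(\Psi),-\Delta_{\cdot,2}(\Psi),\ldots]^T$ — wait, more carefully: the last row of $G^{-1}$ equals $(\det G)^{-1}$ times the signed cofactors obtained by deleting the last row of $G$, which is precisely $\widetilde N$ in the notation of Theorem~\ref{aug111}, built from $\Psi$. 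On the other hand $G$ unitary forces the last row of $G^{-1}=G^*$ to be $\overline{g_n}$. Thus $g_n^T$ (conjugated) is a nonzero constant multiple of the cofactor row of $\Psi$, and the hypothesis $g_n^T\in MCT^-_n$ translates, via Theorem~\ref{aug82} and Theorem~\ref{aug93}(b) applied on the lower half-plane, into $\Phi:=G^*$-type data being left invertible — more directly, it gives that the $n\times(n-1)$ matrix $\Phi=\overline{\Psi}^T\in(M^-_\infty)^{n\times(n-1)}$ (entries of $\Psi$ are in $M^+_\infty$, so their conjugates are in $M^-_\infty$) has its cofactor tuple $[\Delta_{1,\cdot}(\Phi),\ldots,\Delta_{n,\cdot}(\Phi)]$ a constant multiple of $g_n^T\in MCT^-_n$, hence $\Phi$ is left invertible over $M^-_\infty\subseteq H^-_\infty+C$. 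But $\Phi$ is (a rearrangement of) the submatrix of $G$ obtained by deleting a column — indeed for unitary $G$, $G$ and $\overline{G}^T$ have conjugate-related columns. This is the delicate bookkeeping step. Once $\Phi$, an $n\times(n-1)$ submatrix of $G$ obtained by omitting a column, is seen to be left invertible over $H^-_\infty+C$, Theorem~\ref{th:aug113}(a) gives near Fredholm equivalence, and since $\det G$ is a unimodular constant, $T_{\det G}$ is invertible (its index is $0$, it is invertible as multiplication by a constant), whence $T_G$ is Fredholm with index $0$; combined with Coburn — which holds once we are in the $H^-_\infty$ case — this forces invertibility. For the stronger conclusion, when additionally $\Psi\in(H^+_\infty)^{(n-1)\times n}$ and $g_n^T\in HCT^-_n$, the same argument with $H^-_\infty$ in place of $H^-_\infty+C$ gives, via Theorem~\ref{th:aug113}(b), strict Fredholm equivalence to $T_{\det G}$, and since the latter (constant unimodular symbol) is invertible, so is $T_G$.

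Part (ii) is entirely parallel, with ``unitary, $G^{-1}=G^*$'' replaced by ``(complex) orthogonal, $G^{-1}=G^T$''. Now there is no complex conjugation: the last row of $G^{-1}=G^T$ is literally $g_n$, and it equals $(\det G)^{-1}$ times the cofactor row of $\Psi$, which has entries in $M^+_\infty$; hence $g_n^T\in MCT^+_n$ says the cofactor tuple of the $n\times(n-1)$ matrix $\Psi^T$ (entries in $M^+_\infty$, obtained from $G$ by deleting a column after transposing — again requiring care, since orthogonality ties $G$ to $G^T$, whose columns are the rows of $G$) lies in $MCT^+_n$, so $\Psi^T$ — equivalently the relevant $n\times(n-1)$ submatrix of $G$ with a column deleted — is left invertible over $M^+_\infty\subseteq H^+_\infty+C$. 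Apply Theorem~\ref{th:aug113}(a) on the upper half-plane side (i.e. its $H^+_\infty+C$ analogue, mentioned in the Remark after Theorem~\ref{aug910} and used implicitly throughout) to get near Fredholm equivalence; constancy of $\det G$ again upgrades this to Fredholmness of $T_G$ with index $0$. Under the extra hypotheses the strict version follows from Theorem~\ref{th:aug113}(b) and invertibility of $T_{\det G}$. The main obstacle in the whole proof is the second paragraph's bookkeeping: correctly identifying, for a unitary (resp. orthogonal) $G$ with constant determinant, the precise $n\times(n-1)$ submatrix of $G$ obtained by omitting a column whose left invertibility is encoded by the corona condition on $g_n^T$, and checking that the conjugation (in the unitary case) maps $M^+_\infty$-data to $M^-_\infty$-data consistently with the sidedness demanded by Theorem~\ref{th:aug113}. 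Everything else is a direct citation of the results of Sections~\ref{s:corona} and~\ref{s:osift}.
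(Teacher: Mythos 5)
Your part (iii) is correct and is essentially the paper's argument. The trouble is in parts (i) and (ii). You correctly isolate the key algebraic fact --- for unitary $G$ with constant determinant, $G^{-1}=G^*=(\det G)^{-1}\adj G$ forces $g_{nk}=(-1)^{n+k}\det G\,\overline{\Delta_{\cdot,k}(\Psi)}$ --- but you then take the wrong turn with it. You try to manufacture an $n\times(n-1)$ matrix ($\overline{\Psi}^T$ in case (i), $\Psi^T$ in case (ii)) to which Theorem~\ref{th:aug113} could be applied, and you assert that this matrix ``is (a rearrangement of) the submatrix of $G$ obtained by deleting a column.'' That assertion is false: the columns of $\overline{\Psi}^T$ are the conjugated rows of $G$, not columns of $G$ (unitarity relates the rows of $G$ to the columns of $G^{-1}$, not to the columns of $G$), and likewise $\Psi^T$ is built from rows of $G$. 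Since Theorem~\ref{th:aug113} requires its $\Phi$ to be an actual column-deleted submatrix of $G$, your reduction does not go through; this is exactly the step you flag as ``the delicate bookkeeping'' without resolving it. (The invocation of an ``upper half-plane analogue'' of Theorem~\ref{th:aug113} in part (ii) is likewise not something the paper provides or needs.)

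The repair --- and the paper's route --- stays entirely on the row side. The identity above shows that $g_n^T\in MCT^-_n$ is equivalent (after conjugation, which swaps the half-planes, and after discarding the unimodular constant $\det G$ and the signs) to $(\Delta_{\cdot,k}(\Psi))_{k=1,\dots,n}\in MCT^+_n$, i.e.\ to condition \eqref{delta}. By the right-invertibility analogue of Theorem~\ref{aug93}(b), $\Psi$ --- which genuinely is the submatrix of $G$ obtained by omitting the last row --- is right invertible over $M^+_\infty\subset H^+_\infty+C$, and Theorem~\ref{aug112}(a) gives near Fredholm equivalence of $T_G$ with $T_{\det G}$; the latter is invertible because $\det G$ is a unimodular constant, so $T_G$ is Fredholm. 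Under the stronger hypotheses the same computation gives right invertibility of $\Psi$ over $H^+_\infty$, and Theorem~\ref{aug112}(b) yields invertibility of $T_G$. Part (ii) is identical except that orthogonality gives $g_{nk}=(-1)^{n+k}\det G\,\Delta_{\cdot,k}(\Psi)$ with no conjugation, so $g_n^T\in MCT^+_n$ is directly the condition \eqref{delta}. No appeal to Theorem~\ref{th:aug113} is needed anywhere.
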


Of course $\abs{\det G}=1$ in case (i) and $\det G=\pm 1$ in case (ii).

\begin{proof} (i) Observe that the $k$-th entry $g_{nk}$ of $g_n$ coincides with $(-1)^{n+k}\det G\overline{\Delta_{.,k}(\Psi)}$, $k=1,\ldots,n$. Thus, condition $g_n^T\in MCT_n^-$ can be rewritten equivalently as \eq{delta} (\Delta_{.,k}(\Psi))_{k=1,\ldots,n}\in MCT^+_n.\en By the right invertibility analogue of Theorem~\ref{aug93}, it follows that $\Psi$ is right invertible over $M_\infty^+$, and Theorem~\ref{aug112}(a) implies that $T_G$ is Fredholm. The second part of (i) follows analogously from Theorem~\ref{aug112}(b).

(ii) If $G$ is orthogonal, then $g_{nk}=(-1)^{n+k}\det G\Delta_{.,k}(\Psi)$, so that now $g_n^T\in MCT_n^+$ can be rewritten as \eqref{delta}. The rest of the proof goes as in (i).

(iii) The invertibility of any $(n-1)\times (n-1)$ minor of $\Psi\in (M^+_\infty)^{(n-1)\times n}$ implies \eqref{delta}. So, Theorems~\ref{aug93} and \ref{aug112} again do the job. \end{proof}

\begin{rem}In the case of orthogonal \eqref{6.1} we automatically have $g_n\in (M^+_\infty)^{1\times n}$, so that the relation $GG^T=I$ immediately provides the right inverse of $\Psi$ over $M^+_\infty$. It can then be used in factorization formulas \eqref{II.5}--\eqref{II.7} of Theorem~\ref{aug112}(c).

A factorization of unitary matrices $G$ with $\det G=1, \Psi\in (H_\infty^+)^{(n-1)\times n}$ and $g_n^T\in HCT_n^-$ as in Theorem~\ref{th:6.1}, was by different methods considered earlier in \cite{EJL98}.
\end{rem}

We will now show that the one sided invertibility requirement in part (a) of Theorems~\ref{aug112}, \ref{th:aug113} can be lifted if the submatrix in question is continuous. First we will dispose of the case when it is rational.

\begin{lem}\label{l:rat} Let $G$ be of the form{\em\eqref{6.1}} with $\Psi\in{\mathcal R}^{(n-1)\times n}$. Then $T_G$ is nearly Fredholm equivalent to $T_{\det G}$. \end{lem}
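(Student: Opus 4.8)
The plan is to reduce the rational case to the (already established) part (a) of Theorem~\ref{aug112} by splitting off the singular behaviour of $\Psi\in{\mathcal R}^{(n-1)\times n}$ at finitely many points. Since $\Psi$ is rational, it has a finite set of poles in $\C$, and at each pole it may well fail the corona-type condition needed for right invertibility over $H_\infty^++C$. So first I would isolate the poles of $\Psi$ lying in $\overline{\C^+}\cup\{\infty\}$ and produce a scalar rational function $q\in{\mathcal{GR}}$ (a product of factors $(\xi-z_0)^{\ell}$, normalized by powers of $\xi+i$) that clears all of them, in such a way that $\Psi=q\Psi_0$ with $\Psi_0\in{\mathcal R}^{(n-1)\times n}$ having no poles in $\overline{\C^+}\cup\{\infty\}$, hence $\Psi_0\in({\mathcal R}^+)^{(n-1)\times n}\subset (H_\infty^+)^{(n-1)\times n}$. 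Choosing the orders $\ell$ minimally over the rows/entries, as in the proof of Theorem~\ref{aug82}(b), I can moreover arrange that $\Psi_0$ is right invertible over $H_\infty^+$: its $(n-1)\times(n-1)$ minors, being rational with no poles in $\overline{\C^+}\cup\{\infty\}$ and no common zero there, form a tuple in $HCT_n^+$, so the right-invertibility analogue of Theorem~\ref{aug93}(a) applies.

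Next I would track what the scalar factor $q$ does to $G$ and to $\det G$. Writing $G=\operatorname{diag}[q,\dots,q,1]\,G_0$ with $G_0=\left[\begin{smallmatrix}\Psi_0\\ g_n\end{smallmatrix}\right]$, we have $\det G=q^{\,n-1}\det G_0$ (up to the obvious bookkeeping), and the diagonal prefactor $\operatorname{diag}[q,\dots,q,1]$ lies in ${\mathcal{GR}}^{n\times n}$. Since $q\in{\mathcal{GR}}$ factors as $q=q_-r^{j}q_+$ with $q_\mp\in{\mathcal{GR}}^\mp$ and $j\in\Z$, multiplication by the rational matrix $\operatorname{diag}[q,\dots,q,1]$ alters a $WH$ $p$-factorization only in a controlled way (it changes the middle factor $D$ by $\operatorname{diag}[r^j,\dots,r^j,1]$ and the outer factors by rational units), so by Theorem~\ref{feb31} together with \cite[Theorem 5.5]{LS} the operator $T_G$ is nearly Fredholm equivalent to $T_{G_0}$, and similarly $T_{\det G}$ is nearly Fredholm equivalent to $T_{\det G_0}$.

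Now $G_0$ is of the form \eqref{6.1} with $\Psi_0\in (H_\infty^+)^{(n-1)\times n}$ right invertible over $H_\infty^+$ (in particular over $H_\infty^++C$), so Theorem~\ref{aug112}(a) applies directly to $G_0$: $T_{G_0}$ is nearly Fredholm equivalent to $T_{\det G_0}$. Chaining the three equivalences — $T_G\sim T_{G_0}\sim T_{\det G_0}\sim T_{\det G}$ — and using that "nearly Fredholm equivalent" is clearly an equivalence relation, we conclude that $T_G$ is nearly Fredholm equivalent to $T_{\det G}$, as claimed.

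The main obstacle I anticipate is the bookkeeping in the first step: one must choose $q$ so that simultaneously (i) $\Psi_0$ has no poles in $\overline{\C^+}\cup\{\infty\}$ and (ii) the minors of $\Psi_0$ have no common zero there, i.e. $\Psi_0$ is genuinely right invertible over $H_\infty^+$ and not merely pole-free. This is exactly the minimal-order argument already used for $M_\infty^\pm$ corona tuples in the proof of Theorem~\ref{aug82}(b), but here it must be applied to the full rectangular matrix $\Psi$ rather than to a single tuple; the clean way to do it is to apply that argument to the tuple of $(n-1)\times(n-1)$ minors $\big(\Delta_{\cdot;k}(\Psi)\big)_{k=1,\dots,n}$, extract the common rational factor $q^{\,n-1}$ (equivalently a factor $q$ per row), and then verify via Cauchy--Binet that dividing $\Psi$ by $q$ indeed removes precisely that factor from every minor. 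Everything else is a routine invocation of the already-quoted factorization results.
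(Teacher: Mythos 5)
There is a genuine gap, and it sits exactly where you flag your ``main obstacle'': the reduction $\Psi=q\Psi_0$ with a \emph{scalar} $q\in\mathcal{GR}$ and $\Psi_0$ right invertible over $H_\infty^+$ is in general impossible, and your argument only covers the cases where it happens to work. Two concrete failures. First, if the minors $\Delta_{\cdot;k}(\Psi)$ have a common zero on $\dot\R$ (e.g.\ $n=2$, $\Psi=[\,\xi/(\xi+i),\ \xi/(\xi+i)\,]$, common zero at $0$), no $q\in\mathcal{GR}$ can repair this, since elements of $\mathcal{GR}$ have neither zeros nor poles on $\dot\R$; in this case $\Psi$ is not right invertible over $M_\infty^+$ at all, and the lemma holds for a different, trivial reason: expanding along the last row, $\det G$ is a bounded combination of these minors, hence not invertible in $L_\infty$, so $T_{\det G}$ and $T_G$ are \emph{both} non-Fredholm. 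Your proposal never treats this case, which is precisely the first half of the paper's proof. Second, even with no common zeros on $\dot\R$, a single scalar factor need not suffice: take $n=3$ and $\Psi=\left[\begin{smallmatrix}1&0&0\\0&r&r\end{smallmatrix}\right]$ with $r=(\xi-i)/(\xi+i)$. The minors are $(0,r,r)$, which do lie in $MCT_3^+$, but cannot be brought into $HCT_3^+$ by dividing $\Psi$ by any scalar $q$: a zero of $q$ at $i$ puts a pole into the $(1,1)$ entry, while if $q(i)\neq0$ the minor $q^{-2}r$ still vanishes at $i$ (and in any case $q^{-2}r$ has odd order at $i$, so it is never invertible in $H_\infty^+$). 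The correct matrix-level decomposition is the right-invertibility analogue of Theorem~\ref{aug910}, with rational \emph{matrix} factors on both sides, not a common scalar one.

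The larger point is that the whole reduction to right invertibility over $H_\infty^+$ is unnecessary. The paper's proof is two lines: if the minors have a common zero on $\dot\R$, both operators are non-Fredholm; otherwise the minors, being rational with at most finitely many common zeros, all lying in $\C^\pm$, satisfy \eqref{delta}, so by the right-invertibility analogue of Theorem~\ref{aug93} the matrix $\Psi$ is right invertible over $M_\infty^+\subset H_\infty^++C$, and Theorem~\ref{aug112}(a) --- which asks only for right invertibility over $H_\infty^++C$, not over $H_\infty^+$ --- applies directly. Your chaining of near Fredholm equivalences through $G_0$ is sound in itself, but the object $G_0$ it relies on does not always exist in the form you construct it.
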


\begin{proof}If the determinants $\Delta_{.,k}(\Psi)$, $k=1,\ldots, n$ (which are rational functions in $\mathcal R$) have at least one common zero in $\dot{\R}$, then $\det G$ has the same zero and thus neither $T_{\det G}$ nor $T_G$ is Fredholm.

Suppose now there are no common zeros of $\Delta_{.,k}(\Psi)$ in $\dot{\R}$. Since there are at most finitely many such zeros in $\C^\pm$,  then \eqref{delta} holds again. By Theorems~\ref{aug93}, $\Psi$ is right invertible over $M^+_\infty$. The statement now follows from Theorem~\ref{aug112}.
 \end{proof}

\begin{thm}\label{th:cont} Let $G\in L_\infty^{n\times n}$ be such that all its elements except maybe for those located in one row or one column are continuous on $\dot{\R}$. Then $T_G$ is nearly Fredholm equivalent to $T_{\det G}$.  \end{thm}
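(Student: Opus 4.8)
The plan is to reduce the continuous case to the rational case handled in Lemma~\ref{l:rat} by a perturbation argument. Assume, as we may (passing to $G^T$ if necessary and relabelling), that $G$ has the form \eqref{6.1} with $g_n\in L_\infty^{1\times n}$ arbitrary and $\Psi\in C^{(n-1)\times n}$, i.e. all entries outside the last row are continuous on $\dot\R$. If $G$ is not Fredholm then neither is $T_{\det G}$ (we will want to arrange this as part of the argument below), so the interesting case is when $T_G$ is Fredholm; but rather than split into cases by hand, the cleanest route is to approximate $\Psi$ in the uniform norm by a rational matrix function and invoke stability of the Fredholm index under small perturbations of the symbol in $L_\infty^{n\times n}$.

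Concretely, first I would observe that the set $\mathcal R^{(n-1)\times n}$ is dense in $C^{(n-1)\times n}$ in the uniform norm, so we may pick $\Psi_m\in\mathcal R^{(n-1)\times n}$ with $\norm{\Psi_m-\Psi}_\infty\to 0$. Set
\[
G_m=\left[\begin{matrix}\Psi_m \\ g_n\end{matrix}\right]\in L_\infty^{n\times n},
\]
so that $\norm{G_m-G}_\infty\to 0$, and consequently $\norm{T_{G_m}-T_G}\to 0$ as operators on $(H_p^+)^n$ for each fixed $p\in(1,\infty)$. By Lemma~\ref{l:rat}, $T_{G_m}$ is nearly Fredholm equivalent to $T_{\det G_m}$. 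At the same time $\det G_m$ is a finite sum of products of one entry of $g_n$ with an $(n-1)\times(n-1)$ minor of $\Psi_m$, hence $\norm{\det G_m-\det G}_\infty\to 0$ (cofactor expansion along the last row, together with $\norm{\Psi_m}_\infty$ bounded and $\norm{g_n}_\infty$ fixed), so $\norm{T_{\det G_m}-T_{\det G}}\to 0$ as well.

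Now I would feed this into the standard stability theory. For $m$ large, $T_{G_m}$ is within $\varepsilon$ of $T_G$ in operator norm; if $T_G$ is Fredholm, then so is $T_{G_m}$ for large $m$, with $\ind T_{G_m}=\ind T_G$, and conversely if $T_G$ is not Fredholm one must argue that $T_{G_m}$ is eventually non-Fredholm too --- this is the one point requiring care. The clean way around it is to run the comparison through $\det G$: the scalar symbols $\det G_m\to\det G$ in $L_\infty(\R)$, so by the same perturbation stability $T_{\det G_m}$ is Fredholm with $\ind T_{\det G_m}=\ind T_{\det G}$ for large $m$ when $T_{\det G}$ is Fredholm, and $T_{\det G_m}$ is eventually non-Fredholm when $T_{\det G}$ is not --- here the scalar case is genuinely easier because a non-Fredholm $T_g$ is either non-invertible ``at a symbol zero'' or has the essential-spectrum obstruction, and small $L_\infty$-perturbations of a non-Fredholm scalar Toeplitz symbol stay non-Fredholm on a neighbourhood (equivalently, the set of Fredholm Toeplitz operators is open and the index is locally constant, which already gives what we need: if $T_{\det G_m}$ were Fredholm for infinitely many $m$, then $T_{\det G}$, being a norm-limit of them, would itself be Fredholm). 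Combining: $T_G$ Fredholm $\Leftrightarrow$ $T_{G_m}$ Fredholm for large $m$ $\Leftrightarrow$ $T_{\det G_m}$ Fredholm for large $m$ $\Leftrightarrow$ $T_{\det G}$ Fredholm, and when all of these hold, $\ind T_G=\ind T_{G_m}=\ind T_{\det G_m}=\ind T_{\det G}$ for $m$ large. Hence $T_G$ is nearly Fredholm equivalent to $T_{\det G}$, and in fact Fredholm equivalent.

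The main obstacle I anticipate is precisely the ``non-Fredholm is preserved under approximation'' direction: norm convergence of operators does not by itself propagate non-Fredholmness forward, only Fredholmness is forward-stable, so the argument has to be organized so that the limit is taken in the right direction --- i.e. one uses that the Fredholm set is open and the index locally constant to conclude that the limit $T_{\det G}$ of a sequence that is Fredholm infinitely often must be Fredholm, thereby closing the equivalence without ever needing to ``perturb into'' the non-Fredholm set. Once the bookkeeping is set up through the scalar symbols $\det G_m$ this is routine; everything else (density of rationals in $C$, continuity of the determinant map on bounded sets, $\norm{T_A-T_B}\le\norm{A-B}_\infty$) is standard.
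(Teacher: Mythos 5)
There is a genuine gap at exactly the point you flag and then dismiss as routine. Openness of the set of Fredholm operators gives stability in one direction only: if $T$ is Fredholm, then every operator sufficiently close to $T$ is Fredholm. It does \emph{not} follow that a norm limit of Fredholm operators is Fredholm (consider $T_m=\frac1m I\to 0$), and the set of non-Fredholm operators is not open; in the Toeplitz setting, $g\equiv 0$ is a non-Fredholm scalar symbol while $T_{g+\varepsilon}$ is invertible for every $\varepsilon>0$. Consequently the two closing links of your chain --- ``$T_{G_m}$ Fredholm for large $m$ $\Rightarrow$ $T_G$ Fredholm'' and ``$T_{\det G_m}$ Fredholm for large $m$ $\Rightarrow$ $T_{\det G}$ Fredholm'' --- are both unjustified: the radius of Fredholm stability around $T_{G_m}$ (roughly the reciprocal of the essential norm of a regularizer) may shrink to zero faster than $\norm{T_{G_m}-T_G}$. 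A secondary point: Lemma~\ref{l:rat} asserts only \emph{near} Fredholm equivalence, with no relation between the indices, so the equality $\ind T_{G_m}=\ind T_{\det G_m}$ and your concluding claim that $T_G$ and $T_{\det G}$ are in fact Fredholm equivalent are not available from it.

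The paper's proof is organized precisely so that no limit of Fredholm operators is ever taken. For the direction ``$T_G$ Fredholm $\Rightarrow$ $T_{\det G}$ Fredholm'' it does perturb $\Psi$ to a rational $\widetilde\Psi$ (legitimate, since one perturbs around the Fredholm operator $T_G$), but then rescales the last row by the factor $\det G/\det G_1$ so that the resulting matrix $\widetilde G$ satisfies $\det\widetilde G=\det G$ \emph{exactly}; Lemma~\ref{l:rat} applied to $\widetilde G$ then yields Fredholmness of $T_{\det G}$ itself, with no limiting argument on the scalar side. For the converse direction the perturbation approach is abandoned altogether: a left regularizer is obtained from $T_{\adj G}T_G=(\det G)I+{}$compact, and a right regularizer from a block-triangular analysis of $T_GT_{\adj G}$, using compactness of the relevant semi-commutators for continuous symbols. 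To repair your argument you would need devices of this kind (an exact determinant-preserving perturbation in one direction and a regularizer construction in the other); as written, the proposal does not prove the theorem.
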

\begin{proof} Without loss of generality, $G$ is of the form \eqref{6.1} with $\Psi\in C^{(n-1)\times n}$.

{\sl Necessity.} Suppose $T_G$ is Fredholm. Then $\det G$ is invertible in $L_\infty$. Expanding $\det G$ across the last row, represent it as \[
\det G = \sum_{j=1}^n f_j g_{n,j}, \] where the cofactors $f_j$ are continuous due to the continuity of $\Psi$. Let us approximate $\Psi$ by a rational matrix function $\widetilde{\Psi}$ so closely that the Toeplitz operator with the modified symbol $G_1=\left[\begin{matrix}\widetilde{\Psi} \\ g_n\end{matrix}\right]$ remains Fredholm. In particular, $\det G_1=\sum_{j=1}^n\widetilde{f}_jg_{n,j}$ is still invertible.

Now let \[ \widetilde{g}_{n,j}=g_{n,j}\det G/\det G_1, \ \widetilde{g_n}=[\widetilde{g}_{n,1} \ldots \widetilde{g}_{n,n} ] \text{ and } \widetilde{G}= \left[\begin{matrix}\widetilde{\Psi} \\ \widetilde{g}_n\end{matrix}\right] . \] The matrix function $\widetilde{G}$ can be made arbitrarily close to $G$, so that we may suppose $T_{\widetilde{G}}$ to be Fredholm. By Lemma~\ref{l:rat}, the operator $T_{\det\widetilde{G}}$ is Fredholm. It remains to observe that \[ \det \widetilde{G}= \sum_{j=1}^n \widetilde{f}_j\widetilde{g}_{n,j}=\det G. \]

{\sl Sufficiency.} Along with $T_G$, let us consider $T_{\adj G}$, where $\adj G$ stands for the transposed matrix of the cofactors of $G$.
Recall that \eq{adj} G\adj G = \adj G\, G = (\det G) I_n, \en
and let $I_+$, $I_+^n$ denote the identity operators on $H_p^+$,
$(H_p^+)^n$, respectively.
Since the first $n-1$ rows of $G$ and the last column of $\adj G$ are  continuous
on  ${\R}\cup \{\infty\}$, the operator $$k_\ell:=T_{\adj G}T_G-T_{G\adj G}$$ is compact (Corolary 3.5 in \cite{MP86}).
Taking (\ref{adj}) into account, we conclude that
$$ T_{\adj G}T_G=(\det G)I^n_+  + k_\ell $$
is Fredholm and therefore $T_G$ has a left regularizer
(that is, a left inverse modulo the ideal of compact operators).

To show that $T_G$ has also a right regularizer --- and therefore
$T_G$ is Fredholm --- we consider $T_GT_{\adj G}$. In this case, the difference
$T_GT_{\adj G}-T_{G\cdot {\adj G}}$ may not be compact, so we have to use different (and somewhat more involved) arguments. Let $[T_{ij}]$, $(i,j\in \{1,2\})$,
be the block representation of the operator
$$ T_GT_{\adj G}-T_{G\cdot {\adj G}}=T_GT_{\adj G}- (\det G)I^n_+, $$
corresponding to the decomposition $(H_p^+)^n =(H_p^+)^{n-1}
\oplus H_p^+. $ The operators $T_{11}$, $T_{12}$, and $T_{22}$ are compact
(by \cite[Corollary 7.5]{MP86}), and we can write
\begin{eqnarray*}
 T_GT_{\adj G}& = & (\det G)I_+^n + \left[\begin{array}{cc} T_{11} & T_{12}\\
 T_{21} & T_{22} \end{array}\right]
 \\ & = & \left[\begin{array}{cc} (\det G)I_+  & 0\\
 T_{21} &  (\det G)I_+  \end{array}\right]+ \left[\begin{array}{cc} T_{11} & T_{12}\\
 0 & T_{22} \end{array}\right].
 \end{eqnarray*}

Thus $T_G T_{\adj G}$ is a compact perturbation of a block triangular operator
which is Fredholm since its diagonal elements are Fredholm
(see, e.g., Corollary~1.3 in \cite{LS}). Thus $T_G T_{\adj G}$ is Fredholm which implies that $T_G$ has a right regularizer as well. \end{proof}

Note that some relations between semi-Fredholmness of $T_G$ and $T_{\det G}$ in the setting of Theorem~\ref{th:cont} can be extracted from Markus-Feldman results (\cite{MF80}, see also \cite[Chapter 1]{Kru87}) on the one sided invertibility of matrices over some non-commutative ring but with entries from different rows or columns pairwise commuting.

\section{Toeplitz operators with almost periodic symbols}\label{s:ap}

The difference between the scalar and matrix settings becomes even more profound for Toeplitz operators with almost periodic symbols. To define the latter, first we introduce $APP$, the (non-closed) algebra of {\em almost periodic polynomials}, that is, linear combinations of the functions
$e_\lambda(t):=e^{i\lambda t}$, $\lambda\in\R$. The Banach algebra $AP$ of almost periodic functions by definition is the closure
of $APP$ in $L_\infty(\R)$. We will also need $APW$, the closure of $APP$ in the {\em Wiener norm}
\[ \norm{\sum c_je_{\lambda_j}}=\sum\abs{c_j}, \] with no repetitions in the set $\{\lambda_j\}$.

Let further $AP^\pm$ ($APW^\pm$) denote the closure in $AP$ (respectively, $APW$) of all $f=\sum c_je_{\lambda_j}\in APP$ with $\pm\lambda_j\geq 0$.
Note that $AP^\pm$ (respectively, $APW^\pm$) consist of all functions $f\in
AP$ (respectively, $APW$) that admit holomorphic continuation into
$\C^{\pm}$. Of course, $AP^\pm$ and $APW^\pm$ are unital Banach
subalgebras of $AP$ and $APW$, respectively.

For any $f\in\mathcal{G}AP$ there exists a
unique $\kappa\in\R$ such that a continuous branch of
$\log(e_{-\kappa}f)$ lies in $AP$. This $\kappa$ is called the {\em
mean motion} of $f$, and is sometimes denoted $\kappa(f)$.

Operators $T_f$ with scalar $f\in AP$ were treated by Coburn-Douglas \cite{CD} and Gohberg-Feldman \cite{GF68}, and the situation with them is as follows: the operator $T_f$ is semi-Fredholm  if $f\in {\mathcal G}AP$ and has a non-closed range otherwise. Moreover, for $f\in {\mathcal G}AP$ with $\kappa(f)=0$, $T_f$ is invertible while in the case of non-zero $\kappa(f)$ one of its defect numbers is infinite. In particular, $T_f$ is Fredholm only if it is invertible.

The latter property persists for matrix $AP$ symbols, see \cite[Chapter 18]{BKS1}. However, it is no longer true that the invertibility of $G\in AP^{n\times n}$, or even $APW^{n\times n}$, implies the semi Fredholmness of $T_G$. Moreover, there exist \eq{triang} G=\begin{bmatrix} e_{-\lambda} & 0 \\ f & e_\lambda \end{bmatrix} \en (so that $\det G\equiv 1$) with $\lambda>0$ and $f\in APP$  for which the range of $T_G$ is not closed \cite{SY}.

To describe the situation further, we introduce the notion of $AP$ and $APW$ factorization.

Representation  \eqref{2.5} in which $G_\pm$ satisfy \eq{apf}
G_+\in \mathcal{G}(AP^+)^{n\times n}, \quad  G_-\in
\mathcal{G}(AP^-)^{n\times n} \en and the diagonal elements of $D$ have the
form $e_{\mu_j}$, as opposed to \eqref{2.6}, is called a (right) {\em
$AP$ factorization} of $G$.  An $AP$ factorization of $G$ is
by definition its $APW$ factorization if conditions
\eqref{apf} are strengthened to
\[ G_+\in \mathcal{G}(APW^+)^{n\times n}, \quad  G_-\in
\mathcal{G}(APW^-)^{n\times n}.
\]  The real parameters $\mu_j$ are defined uniquely, provided that an $AP$ (or $APW$) factorization of $G$ exists,
and are called its {\em partial $AP$ indices}. Of course, a canonical
(that is, satisfying $\mu_1=\ldots=\mu_n=0$) $AP$ factorization of $G$
is at the same time a  bounded canonical factorization.

In line with Theorem~\ref{feb31} (though requiring a rather involved independent proof), Toeplitz operators $T_G$ with $G\in APW^{n\times n}$ are invertible if and only if $G$ admits a canonical $AP$ (equivalently, $APW$) factorization \cite[Section 9.4]{BKS1}. However, the necessary and sufficient conditions for $AP$ factorization, canonical or not, to exist are presently not known. The question is open even for already mentioned triangular  $2\times 2$ matrix functions \eqref{triang}. Quite a few partial results were obtained in this direction, showing that the problem is indeed intriguing and complicated. An interested reader may consult \cite{BKS1} for a coherent description of the state of affairs as of about ten years ago, and \cite{BrudRS11,CaKaS09,CaKaS10} for some more current results.

Because of these reasons, statements relating the Fredholm properties of $T_G$ and $T_{\det G}$, as well as factorization formulas for $G$, are of special interest in the $AP$ setting.

The $AP$ version of Theorem~\ref{aug112} is as follows.
\begin{thm}\label{th:ap1}Let $G\in AP^{n\times n}$ be invertible, and suppose that it contains
a submatrix  $\Psi\in (AP^+)^{(n-1)\times n}$ which is right invertible
over $AP^+$. Then the operator $T_G$ is invertible (resp. right
invertible, or left invertible) on $(H_p^+)^n$ for any (equivalently, all)
$p\in (1,\infty)$if and only if $\det G$ has zero (resp. non-positive, or
non-negative) mean motion $\kappa$. If in addition $G\in
APW^{n\times n}$ and $\kappa\geq 0$, then $G$ is $APW$ factorable,
and its partial $AP$ indices are $0,\ldots 0$ ($n-1$ times) and
$\kappa$.
\end{thm}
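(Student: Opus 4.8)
The plan is to mimic, in the $AP$ setting, the reduction carried out in the proof of Theorem~\ref{aug112}, replacing the algebra $H_\infty^+$ by $AP^+$ (and $H_\infty^++C$ by $AP$ where appropriate), and replacing the scalar Coburn property and $WH$ $p$-factorization machinery by the corresponding $AP$-facts recalled just above: namely that $T_f$ with $f\in\mathcal{G}AP$ is always semi-Fredholm with one defect number zero and the other equal to $|\kappa(f)|$ (so Fredholm iff invertible iff $\kappa(f)=0$), and that for $G\in APW^{n\times n}$ invertibility of $T_G$ is equivalent to the existence of a canonical $AP$ ($=APW$) factorization. First I would apply the right-invertibility analogue of Theorem~\ref{aug111}: since $\Psi\in(AP^+)^{(n-1)\times n}$ is right invertible over $AP^+$, with right inverse $\Phi^+$, the augmented matrix $\Phi_e^+=[\Phi^+\ N^+]$ with $N^+$ as in \eqref{II.3'} lies in $\mathcal{G}(AP^+)^{n\times n}$ and $(\Phi_e^+)^{-1}=\Psi_e^+$. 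As in \eqref{II.8} this yields $G=\widetilde G(\Phi_e^+)^{-1}$ with $\widetilde G$ block triangular, diagonal blocks $I_{n-1}$ and (up to sign) $\det G$.

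Next, because $(\Phi_e^+)^{\pm1}\in(AP^+)^{n\times n}$, left and right multiplication by these factors does not change the Toeplitz operator's one-sided invertibility type nor its defect numbers: $T_G$ and $T_{\widetilde G}$ are strictly equivalent in the relevant sense, and likewise $T_{\widetilde G}$ compares to $T_{\det G}$ through the block-triangular structure. Here I would invoke the $AP$-analogue of the triangular-majorization statement (diagonal blocks $I_{n-1}$, i.e. $e_0$'s, and $\det G$, of mean motion $\kappa$): when $G\in APW^{n\times n}$ and $\kappa\ge0$, the partial $AP$ indices of the triangular $\widetilde G$ are majorized by $0,\dots,0,\kappa$ and all have the same sign, hence are exactly $0,\dots,0$ ($n-1$ times) and $\kappa$, which gives the last assertion of the theorem. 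For the one-sided invertibility dichotomy one argues exactly as in Theorem~\ref{aug112}(b): $\ker T_G=\{0\}\Leftrightarrow\ker T_{\widetilde G}=\{0\}\Leftrightarrow\ker T_{\det G}=\{0\}$ and similarly for adjoints, and since $T_{\det G}$ with $\det G\in\mathcal{G}AP$ has one trivial (co)kernel with the other of dimension $|\kappa|$, the same holds for $T_G$; thus $T_G$ is invertible iff $\kappa=0$, right invertible iff $\kappa\le0$, left invertible iff $\kappa\ge0$, and the $p$-independence is inherited from the $p$-independence of these properties for $T_{\det G}$ and from the factors lying in $AP^\pm\subset H_\infty^\pm$.

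The main obstacle I anticipate is not the algebraic reduction, which is essentially verbatim from Section~\ref{s:osift}, but rather justifying the transfer of Fredholm/one-sided-invertibility information through the equivalence $G=\widetilde G(\Phi_e^+)^{-1}$ in the $AP$ category: one must be sure that the analogues of \cite[Theorem~5.5]{LS}, \cite[Corollary~4.1]{LS} and the triangular majorization theorem \cite{Spit801} used for $WH$ $p$-factorization remain valid for $AP$ factorization. For the triangular case with $e_0$'s on the diagonal this is indeed available in \cite[Chapter~8 and Section~9.4]{BKS1}; invertibility of $T_G$ for $G\in APW^{n\times n}$ being equivalent to canonical $APW$ factorization is \cite[Section~9.4]{BKS1}, and the one-sided versions follow by the same semi-Fredholm bookkeeping. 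The restriction to $APW$ (rather than merely $AP$) in the factorization conclusion is exactly because the $T_G$-invertibility $\Leftrightarrow$ factorization equivalence is only known in the Wiener class; for the invertibility/one-sided-invertibility dichotomy alone, $G\in AP^{n\times n}$ with $\Psi\in(AP^+)^{(n-1)\times n}$ suffices, since there we only use that $T_G$ and $T_{\det G}$ have matching defect numbers together with the scalar $AP$ semi-Fredholm theorem.
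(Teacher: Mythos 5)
Your reduction and your treatment of the invertibility dichotomy coincide with the paper's (the paper itself only sketches this part, saying the argument ``runs along the same lines'' as Theorem~\ref{aug112} and using that $T_{\det G}$ with $\det G\in\mathcal{G}AP$ is automatically one-sided invertible with defect $|\kappa|$); that half of your proposal is fine.

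For the final assertion ($APW$ factorability and the partial $AP$ indices) there is, however, a genuine gap. You deduce factorability of the triangular matrix $\widetilde G$ from an ``$AP$-analogue of \cite[Corollary 4.1]{LS} and the triangular majorization theorem.'' No such analogue holds: the paper's own example \eqref{triang} is a triangular $APW$ matrix whose diagonal entries $e_{-\lambda},e_\lambda$ are trivially factorable and whose determinant is $\equiv 1$, yet $T_G$ has non-closed range, so $G$ is not $AP$ factorable. Majorization statements only constrain the partial indices \emph{once a factorization is known to exist}; they cannot be used to establish existence. The paper closes this gap constructively: one writes down the $APW$ factorization of $G$ explicitly via the analogues of formulas \eqref{II.5}--\eqref{II.7}, with $r^k$ replaced by $e_{\kappa(\det G)}$ and $\widetilde P^\pm$ replaced by the projections of $APW$ onto $APW^\pm$. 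This is exactly where the two extra hypotheses enter: membership in the Wiener class is needed because those projections are bounded on $APW$ but not on $AP$ (so the off-diagonal entry can be additively split), and the sign condition on $\kappa$ is needed so that the split pieces land in the correct half ($APW^+$ versus $APW^-$) after being multiplied by $e_{\pm\kappa}$, just as $k\le 0$ is essential in Theorem~\ref{aug112}(c). Once the explicit factors are exhibited, the partial $AP$ indices $0,\dots,0,\kappa$ are read off from the middle factor directly, with no appeal to majorization.
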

The proof runs along the same lines as that of Theorem~\ref{aug112},
taking into consideration that $\det G$ is an invertible $AP$ function
and thus the operator $T_{\det G}$ is automatically one sided invertible.
To construct the $APW$ factorization, one can still use formulas
\eqref{II.5}--\eqref{II.7} substituting
$r^k$ by $e_{\kappa(\det G)}$ and $\widetilde{P}^\pm$
by  the projections of $APW$ onto $APW^\pm$.

The analogue of Theorem~\ref{aug117} also holds.
\begin{thm}\label{th:ap2}Let $G\in APW^{n\times n}$ be invertible, with $\kappa(\det G)\geq 0$.
 Moreover, let there exist
$\Phi^\pm\in (APW^\pm)^{n-1\times n}$ left invertible over
$APW^\pm$ and such that $G\Phi^+=\Phi^-$. Then $G$ is $APW$
factorable, with the partial $AP$ indices equal $0,\ldots,0$ ($n-1$
times) and $\kappa(\det G)$.
\end{thm}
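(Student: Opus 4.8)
The plan is to mirror the proof of Theorem~\ref{aug117} verbatim, replacing the scalar inner functions $r^k$ and the half-plane Riesz projections $\widetilde P^\pm$ by the $AP$ analogues $e_{\kappa(\det G)}$ and the projections of $APW$ onto $APW^\pm$, and invoking the $AP$-factorization machinery wherever Theorem~\ref{aug117} invoked WH $p$-factorization. First I would apply the $AP$-version of Corollary~\ref{aug86}/Theorem~\ref{aug911} to $\Phi^\pm$: since $\Phi^\pm\in(APW^\pm)^{n\times(n-1)}$ is left invertible over $APW^\pm$ with a left inverse $\Psi^\pm\in(APW^\pm)^{(n-1)\times n}$, the extended matrices $\Phi_e^\pm=[\Phi^\pm\ N^\pm]$ and $\Psi_e^\pm=\left[\begin{smallmatrix}\Psi^\pm\\ \widetilde N^\pm\end{smallmatrix}\right]$, with $N^\pm,\widetilde N^\pm$ built from the minors of $\Psi^\pm,\Phi^\pm$ exactly as in \eqref{II.3'}, \eqref{II.3}, lie in $\mathcal G(APW^\pm)^{n\times n}$ and are mutual inverses, by Theorem~\ref{aug111} applied to the unital commutative ring $APW^\pm$ (the hypotheses of Section~\ref{s:osi} are satisfied since $APW^\pm$ is a unital commutative ring). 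This is the step where one uses that $APW^\pm$ is a \emph{nice} algebra: left invertibility of a tuple in $APW^\pm$ is equivalent to the $APW$-corona condition, which in turn behaves well, so the whole of Section~\ref{s:osi} transfers.

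Next I would set $G_0=\Psi_e^- G\,\Phi_e^+$ and, exactly as in the derivation of \eqref{II.14} from \eqref{II.10}–\eqref{II.13}, use $G\Phi^+=\Phi^-$ together with $\Psi_e^\pm\Phi^\pm=\left[\begin{smallmatrix}I_{n-1}\\0\end{smallmatrix}\right]$ to conclude that
\[ G_0=\begin{bmatrix} I_{n-1} & Q \\ 0_{1\times(n-1)} & \det G\end{bmatrix},\qquad Q=\Psi^- G N^+\in APW^{(n-1)\times 1}, \]
and in particular $\det G_0=\det G$. Since $G=\Phi_e^-\,G_0\,\Psi_e^+$ with $\Phi_e^-\in\mathcal G(APW^-)^{n\times n}$ and $\Psi_e^+\in\mathcal G(APW^+)^{n\times n}$, an $APW$ factorization of $G$ is obtained from one of $G_0$ simply by absorbing $\Phi_e^-$ into the minus-factor and $\Psi_e^+$ into the plus-factor. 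So everything reduces to factoring the block-triangular $G_0$.

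For the block-triangular $G_0$ I would use the hypothesis $\kappa(\det G)\geq 0$ together with an $APW$ factorization $\det G=\gamma_- e_{\kappa}\gamma_+$, $\kappa=\kappa(\det G)\geq0$, $\gamma_\pm\in\mathcal G(APW^\pm)$ (such a factorization of a scalar invertible $APW$ function always exists: write $\det G=e_\kappa\cdot(e_{-\kappa}\det G)$ and use $\log(e_{-\kappa}\det G)\in APW$ to split the second factor). Then, exactly as in \eqref{II.10B}–\eqref{II.10F} but with $r^k\rightsquigarrow e_\kappa$ and $\widetilde P^\pm\rightsquigarrow$ the $APW\to APW^\pm$ projections, one checks directly that
\[ G_0=\begin{bmatrix} I_{n-1} & \alpha_+ \\ 0 & 1\end{bmatrix}\begin{bmatrix} I_{n-1} & 0 \\ 0 & \gamma_+\end{bmatrix}\cdot\left(\text{middle}\right)\cdot\begin{bmatrix} I_{n-1} & 0 \\ 0 & \gamma_-\end{bmatrix}\begin{bmatrix} I_{n-1} & \alpha_- \\ 0 & 1\end{bmatrix}^{-1}, \]
reading off the $+$, diagonal ($D=\operatorname{diag}[I_{n-1},e_\kappa]$) and $-$ pieces, where $\alpha_+$ is the $APW^+$-part of $\gamma_+^{-1}Q$ and $\alpha_-=e_{-\kappa}$ times its $APW^-$-part; the verification that this is a genuine factorization is a direct computation using $Q=\gamma_-e_\kappa\gamma_+\cdot(\text{lower entry})$ — actually the relevant identity is that $\alpha_+ + e_\kappa\alpha_-=\gamma_+^{-1}Q$ after the split, which is immediate. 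Since all factors lie in $\mathcal G(APW^\pm)$, condition \eqref{2.9} is automatic (a bounded factorization is a WH $p$-factorization for all $p$, and $APW$-factorizations are bounded), so we indeed obtain an $APW$ factorization with the partial $AP$ indices $0,\ldots,0$ ($n-1$ times) and $\kappa(\det G)$, as claimed.

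The main obstacle I anticipate is not the algebra — that is a line-by-line transcription of Theorems~\ref{aug111} and \ref{aug117} — but making sure the underlying corona/invertibility facts one needs in $APW^\pm$ are actually available: Theorem~\ref{aug111} only requires a unital commutative ring, so it applies, but the passage from ``$\Phi^\pm$ left invertible over $APW^\pm$'' to ``$\Phi_e^\pm$ invertible over $APW^\pm$'' must not secretly use a corona theorem for $APW^\pm$ that we have not proved. Inspecting the proof of Theorem~\ref{aug111}, it uses only Corollary~\ref{aug86} and determinant/cofactor identities valid over any commutative ring, so no $APW$-corona theorem is needed — the left inverse $\Psi^\pm$ is \emph{given}, and the construction of $N^\pm$, $\widetilde N^\pm$ is purely algebraic. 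Hence the only genuinely analytic input is (a) scalar $APW$ functions that are invertible in $APW$ factor as $\gamma_-e_\kappa\gamma_+$, and (b) $APW$-factorizations are bounded, hence WH $p$-factorizations. Both are standard (see \cite[Section 9.4]{BKS1}), so the proof goes through.
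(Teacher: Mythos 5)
Your proposal is correct and follows exactly the route the paper intends: the paper gives no separate proof of this theorem, merely remarking that it is the analogue of Theorem~\ref{aug117} obtained by replacing $r^k$ with $e_{\kappa(\det G)}$ and $\widetilde{P}^\pm$ with the projections of $APW$ onto $APW^\pm$, which is precisely your line-by-line transcription (including the correct identification of the only analytic inputs: Theorem~\ref{aug111} over the unital commutative ring $APW^\pm$, the scalar factorization $\det G=\gamma_-e_\kappa\gamma_+$, and boundedness of the resulting factorization). The only blemish is notational: in your displayed factorization of $G_0$ the $+$ and $-$ factors appear in a scrambled order relative to the convention $G=G_-DG_+$, but the verifying identity $\alpha_++e_\kappa\alpha_-=\gamma_+^{-1}Q$ is the right one, and $\kappa(\det G)\ge 0$ is indeed exactly what places $\alpha_-=e_{-\kappa}\cdot(\cdot)^-$ in $APW^-$.
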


To state the analogue of Theorem~\ref{th:6.1}, let us introduce the notion of the $AP$ corona tuple as $$ APCT^\pm_n:=\left\{[h_1^\pm, h_2^\pm,\ldots, h_n^\pm]\colon h_j^\pm \in AP^\pm \quad \mbox{and}
\quad \inf_{z\in \C^\pm} \left(\sum_{j=1}^n |h_j^\pm (z)|\right)>0\right\}. $$

The almost periodic version of the corona theorem, in principle contained already  in \cite{ArSi56} and stated explicitly in \cite{Xia85}, reads:

Let $h_1^\pm, h_2^\pm,\ldots, h_n^\pm\in AP^\pm$. Then  $[h_1^\pm, h_2^\pm,\ldots, h_n^\pm]\in
APCT^\pm_n$ if and only if
$\left[\begin{array}{c} h_1^\pm \\ \vdots \\ h_n^\pm \end{array}\right]$ is left invertible over
$AP^\pm$.

Consequently, the $AP$ analogue of Theorem~\ref{aug93} holds.
\begin{thm}\label{APaug93}Let $\Phi\in (AP^\pm)^{n\times (n-1)}$. Then
$\Phi$ is left invertible over $AP^\pm$ if and only if
$\left[\Delta_{1,.}(\Phi),\ldots, \Delta_{n,.}(\Phi)\right]\in APCT^\pm_n. $ \end{thm}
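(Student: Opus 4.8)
The plan is to obtain Theorem~\ref{APaug93} exactly as the almost periodic counterpart of Theorem~\ref{aug93}, via the same two-step mechanism: first apply the purely algebraic Corollary~\ref{aug86} with $\mathcal{A}=AP^\pm$, and then translate the resulting left-invertibility condition for a column into the corona-type membership $APCT^\pm_n$ by means of the almost periodic corona theorem quoted immediately above.

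In detail, since $AP^\pm$ is a unital commutative (Banach) algebra, hence in particular a unital commutative ring, Corollary~\ref{aug86} applies verbatim with $\mathcal{A}=AP^\pm$: the matrix $\Phi\in(AP^\pm)^{n\times(n-1)}$ is left invertible over $AP^\pm$ if and only if the column whose $k$-th entry is $\Delta_{k,.}(\Phi)$ (the determinant of the $(n-1)\times(n-1)$ minor of $\Phi$ obtained by deleting its $k$-th row, which indeed lies in $AP^\pm$ because $AP^\pm$ is an algebra) is left invertible over $AP^\pm$. Moreover, when this holds, Corollary~\ref{aug86} supplies an explicit left inverse of $\Phi$ through formulas~\eqref{I.3}--\eqref{I.6}, with $\Delta^*$ any left inverse of that column, so the ``formula~\eqref{I.3} applies'' addendum of Theorem~\ref{aug93} carries over as well.

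It then remains to identify left invertibility of the column $[\Delta_{1,.}(\Phi),\ldots,\Delta_{n,.}(\Phi)]^T$ over $AP^\pm$ with membership $[\Delta_{1,.}(\Phi),\ldots,\Delta_{n,.}(\Phi)]\in APCT^\pm_n$; this is precisely the almost periodic corona theorem (\cite{ArSi56,Xia85}), applied to $h_j^\pm=\Delta_{j,.}(\Phi)$. There is essentially no serious obstacle here once that corona theorem is granted: the only verifications are the trivial facts that $AP^\pm$ is a unital commutative ring (so that Corollary~\ref{aug86} is applicable) and that the minors stay in $AP^\pm$. The genuine mathematical content—the corona theorem for $AP^\pm$—is imported as a known result, exactly as Part~(a) of Theorem~\ref{aug82} imported Carleson's theorem when proving Theorem~\ref{aug93}.
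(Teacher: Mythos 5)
Your proposal is correct and matches the paper's own route exactly: the paper quotes the almost periodic corona theorem of Arens--Singer and Xia and then derives Theorem~\ref{APaug93} with a single ``consequently,'' the implicit mechanism being precisely the application of Corollary~\ref{aug86} with $\mathcal{A}=AP^\pm$ followed by the corona identification, just as in Theorem~\ref{aug93}. Nothing further is needed.
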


Let now $G\in L_\infty^{n\times n}$ with all rows but one having elements in  $AP^+$ (the case of all columns but one having elements in $AP^-$ can be treated analogously). Assume for simplicity that
\[ G=\left[\begin{matrix} \Psi \\ g_n\end{matrix}\right] \text{ with } \Psi\in (AP^+)^{(n-1)\times n}, \ g_n\in L_\infty^{1\times n}. \]
Invoking Theorem~\ref{APaug93}, we immediately obtain
\begin{thm}\label{th:ap6.1} {\em (i)} If $G$ is unitary with constant determinant and $g_n^T\in APCT_n^-$, then $T_G$ is invertible for all $p\in (1,\infty)$.

{\em (ii)} If $G$ is (complex) orthogonal with constant determinant and $g_n^T\in APCT_n^+$, then $T_G$ is invertible for all $p\in (1,\infty)$.

\end{thm}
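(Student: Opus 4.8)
The plan is to run, essentially verbatim, the argument used for Theorem~\ref{th:6.1}, with the algebras $M^\pm_\infty$ replaced by $AP^\pm$: the corona-type hypothesis on the last row $g_n$ will be re-expressed as a corona condition on the full family of maximal minors of $\Psi$, the almost periodic corona theorem (Theorem~\ref{APaug93}, together with its right-invertibility analogue) will convert this into right invertibility of $\Psi$ over $AP^+$, and Theorem~\ref{th:ap1} will then finish the job because $\det G$ is a nonzero constant.

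First I would record how the entries of $g_n$ relate to the minors of $\Psi$. Since $G$ is invertible with $\det G$ a nonzero constant, $G^{-1}=(\det G)^{-1}\adj G$, and expanding cofactors along the deleted last row gives $(\adj G)_{kn}=(-1)^{n+k}\Delta_{\cdot,k}(\Psi)$, where $\Delta_{\cdot,k}(\Psi)\in AP^+$ denotes the determinant of $\Psi$ with its $k$-th column removed. In case (i), $G$ unitary gives $G^{-1}=G^\ast$ and $\abs{\det G}=1$ (from $\det G\,\overline{\det G}=\det(GG^\ast)=1$), hence $g_{nk}=(-1)^{n+k}(\det G)\,\overline{\Delta_{\cdot,k}(\Psi)}$; in particular $g_n\in(AP^-)^{1\times n}$, so $G\in AP^{n\times n}$. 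In case (ii), $G$ complex orthogonal gives $G^{-1}=G^T$ and $\det G=\pm1$ (from $(\det G)^2=\det(GG^T)=1$), hence $g_{nk}=(-1)^{n+k}(\det G)\,\Delta_{\cdot,k}(\Psi)\in AP^+$, and again $G\in AP^{n\times n}$.

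Next I would translate the hypothesis. As $\det G$ is a unimodular constant, in case (ii) the assumption $g_n^T\in APCT_n^+$ is literally $[\Delta_{\cdot,1}(\Psi),\ldots,\Delta_{\cdot,n}(\Psi)]\in APCT_n^+$; in case (i) the assumption $g_n^T\in APCT_n^-$ reads $\inf_{z\in\C^-}\sum_k\abs{\overline{\Delta_{\cdot,k}(\Psi)}(z)}>0$, which under the reflection $z\mapsto\overline z$ --- an involution swapping $\C^\pm$ and carrying $AP^\mp$ onto $AP^\pm$ --- is again $[\Delta_{\cdot,1}(\Psi),\ldots,\Delta_{\cdot,n}(\Psi)]\in APCT_n^+$. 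By the right-invertibility analogue of Theorem~\ref{APaug93}, $\Psi$ is then right invertible over $AP^+$. Now Theorem~\ref{th:ap1} applies: $G\in\mathcal{G}AP^{n\times n}$ contains the right invertible submatrix $\Psi\in(AP^+)^{(n-1)\times n}$, and $\det G$, being a nonzero constant, has mean motion $\kappa=0$; hence $T_G$ is invertible on $(H^+_p)^n$ for every $p\in(1,\infty)$.

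I do not expect a genuine obstacle here --- once Theorems~\ref{APaug93} and \ref{th:ap1} are available, the statement is pure bookkeeping, exactly as in Theorem~\ref{th:6.1}. The only two points deserving a moment's care are: checking that $g_n$ really has entries in $AP$ (rather than merely in $L_\infty$), so that Theorem~\ref{th:ap1} is applicable, which is immediate from the inverse formula above; and, in case (i), the conjugation--reflection symmetry of $AP^\pm$ used to turn a corona condition posed in $\C^-$ into the $\C^+$ corona condition demanded by the almost periodic corona theorem.
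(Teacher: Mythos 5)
Your proposal is correct and follows exactly the route the paper intends: the paper gives no written-out proof here, stating only that the result follows ``immediately'' from Theorem~\ref{APaug93} in the same way that Theorem~\ref{th:6.1} follows from Theorem~\ref{aug93}, and your argument (cofactor identity for $g_{nk}$, conjugation--reflection to move the corona condition from $\C^-$ to $\C^+$ in the unitary case, right invertibility of $\Psi$ over $AP^+$, then Theorem~\ref{th:ap1} with $\kappa(\det G)=0$) is precisely that intended chain of reasoning. The two points you flag as needing care --- that $g_n$ actually lies in $(AP^\mp)^{1\times n}$ so that $G\in\mathcal{G}AP^{n\times n}$, and the handling of the $\C^-$ corona condition --- are handled correctly.
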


{\bf Acknowledgment.} A preliminary version of this paper was presented at the Workshop on Operator Theory and Operator Algebras (Instituto Superior T\'ecnico, Lisbon) in the Fall of 2012. The authors are thankful to Ronald Douglas and Bernd Silbermann for the helpful follow up discussion of the results.

\providecommand{\bysame}{\leavevmode\hbox to3em{\hrulefill}\thinspace}
\providecommand{\MR}{\relax\ifhmode\unskip\space\fi MR }
\providecommand{\MRhref}[2]{%
  \href{http://www.ams.org/mathscinet-getitem?mr=#1}{#2}
}
\providecommand{\href}[2]{#2}

\end{document}